\theoremstyle{plain}
\newtheorem{theorem}{Theorem}[section]
\newtheorem{lemma}[theorem]{Lemma}
\newtheorem{corollary}{Corollary}[section]
\theoremstyle{definition}
\theoremstyle{remark}
\newtheorem{remark}[theorem]{Remark}
\numberwithin{equation}{section}
\newcommand{\bR}{\mathbb{R}}
\newcommand\sL{\mathscr{L}}
\newcommand\oE{\mathsf{E}}
\begin{document}
\title[Trace theorems for weighted mixed norm Sobolev spaces]{On trace Theorems for weighted mixed norm Sobolev spaces and applications}

\author{Tuoc Phan}
\address{Department of Mathematics, University of Tennessee, Knoxville, 227 Ayres Hall, 1403 Circle Drive, Knoxville, TN 37996, U.S.A.}
\email{phan@utk.edu}
\subjclass[2020]{46E35, 35J25, 35J70, 35J75.}
\keywords{Trace theorems, Weighted mixed-norm Sobolev spaces, Mixed-norm Besov spaces, Boundary value problems, Singular and degenerate coefficients.}

\begin{abstract} We prove trace theorems for weighted mixed norm Sobolev spaces in the upper-half space where the weight is a power function of the vertical variable. The results show the differentiability order of  the trace functions depends only on the power in the weight function and the integrability power for the integration with respect to the vertical variable but not on the integrability powers for the integration with respect to the horizontal ones. They are new even in the  un-weighted case and they recover classical results in the case of un-mixed norm spaces. The work is motivated by the study of regularity theory for solutions of elliptic and parabolic equations with anisotropic features and with non-homogeneous boundary conditions. The results provide an essential ingredient to the study of fractional elliptic and parabolic equations in divergence form with measurable coefficients.
\end{abstract}
\maketitle
\section{introduction and main results.}
\subsection{Introduction} We study traces of functions in weighted mixed-norm Sobolev spaces. To put the study into perspectives, let us quickly recall a relevant classical result. For a given open non-empty bounded domain $\Omega \subset \bR^d$ with $d \in \mathbb{N}$, let $Q= \Omega \times (0,1)$.  For $p \in [1, \infty)$ and $\alpha \in \mathbb{R}$, we denote  $L_{p}(Q, \mu)$  the weighted Lebesgue space consisting of measurable functions $f: Q \rightarrow \bR$ such that  
\begin{equation*} 
\|f\|_{L_{p}(Q,\mu)} = \left[ \int_0^1  \int_{\Omega} |f(x, y)|^p y^\alpha dx dy \right]^{\frac{1}{p}}  <\infty
\end{equation*}
where $\mu(y) = y^\alpha$ for $y \in \bR_+$. As usual,  the weighted Sobolev space $W^{1}_{p}(Q, \mu)$ is defined by
\[
W^{1}_{p}(Q, \mu) =\Big \{ u \in L_p(Q, \mu): Du \in L_p(Q, \mu)\Big\}, 
\]
where  $Du = (D_x u, D_yu)$ denotes the gradient of $u$ in the weak sense, and $W^{1}_{p}(Q, \mu) $ is endowed with the norm
\[
\|u\|_{W^{1}_{p}(Q, \mu)} = \|u\|_{L_{p}(Q,\mu)} + \|Du\|_{L_{p}(Q, \mu)}.
\]
On the other hand, for  $s \in (0,1)$ and $p \in [1, \infty)$, a function $v \in L_p(\Omega)$ is said in the Sobolev space $W^{s}_{p}(\Omega)$ if 
\[
\|v\|_{W^{s}_{p}(\Omega)} = \| v\|_{L^p(\Omega)} +\left( \int_{\Omega} \int_{\Omega} \frac{|v(x) - v(z)|^p}{|x-z|^{d + ps}} dx dz \right)^{\frac{1}{p}}< \infty.
\]
The following trace theorem for weighted Sobolev space $W^{1}_{p}(Q, \mu)$ is classical. See \cite[Theorem 2.8]{Nekvinda}, for example. See also \cite[Theorem 2.10] {DK} for a more general domains and general cases, and  \cite[Theorem 2.8, p. 100]{Necas} for the un-weighted case, i.e. $\alpha=0$.
\begin{theorem} \label{Nekvinda-thm} Let $\Omega \subset \bR^d$ be a bounded non-empty set with boundary $\partial \Omega \in C^1$. Also let $p \in [1, \infty)$, and $\alpha \in (-1, p-1)$. Then, there exists a unique linear trace operator 
\[
\mathsf{T}: W^{1}_{p}(Q, \mu) \rightarrow W^{1- \frac{1+\alpha}{p}}_p(\Omega)
\]
such that $\mathsf{T}u(\cdot) = u (\cdot, 0)$ in $\Omega$ for $u \in C(\Omega \times [0, 1)) \cap W^{1}_{p}(Q, \mu)$. Moreover, we have
\[
\|\mathsf{T} u\|_{W^{1- \frac{1+\alpha}{p}}_p(\Omega)} \leq N \|u\|_{W^{1}_{p}(Q, \mu)},
\]
for every $u \in W^{1}_{p}(Q, \mu)$, where $\mu(y) = y^\alpha$, $N = N(\alpha, d, p) >0$, and $Q = \Omega \times (0,1)$.
\end{theorem}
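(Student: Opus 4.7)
The plan is to reduce to the model case $\Omega = \bR^d$ by applying a standard $C^1$-extension operator (partition of unity plus local flattening of $\partial \Omega$) to produce $\tilde u \in W^{1}_{p}(\bR^d \times (0,1), \mu)$ with $\|\tilde u\|_{W^{1}_{p}} \leq C\|u\|_{W^{1}_{p}}$, and then to prove the estimates in that half-space geometry. Since $\mu(y) = y^\alpha$ lies in the Muckenhoupt class $A_p(\bR)$ precisely when $\alpha \in (-1, p-1)$, a standard mollification argument shows that $C^\infty_c(\bR^d \times [0,1))$ is dense in $W^{1}_{p}(\bR^d \times (0,1), \mu)$. I would prove the two required bounds on this dense subclass, define $\mathsf T$ by continuity, and obtain uniqueness from density.

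For smooth $u$ set $\mathsf T u(x) := u(x, 0)$. The $L^p$-estimate comes from the identity $u(x, 0) = u(x, y) - \int_0^y \partial_\tau u(x, \tau)\,d\tau$: integrating in $y \in (0,1)$ and applying H\"older with the weight $y^\alpha$,
\[
\int_0^1 |g(y)|\,dy \leq \Bigl(\int_0^1 |g(y)|^p y^\alpha\,dy\Bigr)^{1/p} \Bigl(\int_0^1 y^{-\alpha/(p-1)}\,dy\Bigr)^{(p-1)/p},
\]
applied to $g = |u(x,\cdot)|$ and $g = |D_y u(x,\cdot)|$, one obtains $|u(x, 0)|^p \leq C \int_0^1 (|u(x, y)|^p + |D_y u(x, y)|^p)\, y^\alpha\,dy$, the second H\"older factor being finite precisely when $\alpha < p - 1$. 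Integrating in $x$ yields $\|\mathsf T u\|_{L^p(\Omega)} \leq C\|u\|_{W^{1}_{p}(Q,\mu)}$.

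For the Gagliardo seminorm of order $s := 1 - (1+\alpha)/p \in (0,1)$, set $\tau := |x - z|$ and use the three-leg path $(x, 0) \to (x, \tau) \to (z, \tau) \to (z, 0)$ to write
\[
u(x,0) - u(z,0) = -\int_0^\tau \partial_y u(x,y)\,dy + \int_0^\tau \partial_y u(z,y)\,dy + \int_0^1 D_x u(z + \sigma(x-z), \tau) \cdot (x - z)\,d\sigma.
\]
For the first two terms, the polar change $z = x + \tau \omega$ reduces the double integral $\int\!\!\int |\cdot|^p |x-z|^{-(d+ps)}\,dx\,dz$ to a multiple of $\int_{\bR^d} \int_0^\infty \tau^{\alpha - p}\bigl(\int_0^\tau |\partial_y u(x,y)|\,dy\bigr)^p d\tau\,dx$ (using $1 + ps = p - \alpha$), which the weighted Hardy inequality $\int_0^\infty t^{\gamma}(\int_0^t f)^p dt \leq C\int_0^\infty y^{\gamma+p} f^p\,dy$, valid for $\gamma := \alpha - p < -1$, bounds by $C\|D_y u\|_{L_{p}(Q,\mu)}^p$. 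For the third term, the substitution $w = z + \sigma(x - z)$ followed by polar coordinates in $h = x - z$ collapses the integral to $\int_0^1 d\sigma \int_{\bR^d}\int_0^\infty |D_x u(w,r)|^p r^\alpha\,dr\,dw = C\|D_x u\|_{L_{p}(Q,\mu)}^p$. The main obstacle is the simultaneous sharpness of the Hardy threshold $\gamma < -1$ and the H\"older threshold $\alpha/(p-1) < 1$, both forcing the single hypothesis $\alpha < p - 1$; careful bookkeeping of the weight powers through the polar changes of variables is the most error-prone part of the argument, while the case $|x - z| \geq 1$ in the bounded setting is handled trivially via the $L^p$ bound already obtained.
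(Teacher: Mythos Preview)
The paper does not actually prove this theorem: it is stated as a classical result with references to Nekvinda, Kim, and Ne\v{c}as, and serves as background for the paper's main Theorem~\ref{thrm1}. That said, your argument is essentially correct and, notably, is the same strategy the paper uses to prove its mixed-norm generalization. Your three-leg path $(x,0)\to(x,\tau)\to(z,\tau)\to(z,0)$ is exactly the decomposition in the paper's Lemma~\ref{b-p-trace}; your use of the weighted Hardy inequality on the vertical legs matches the $I_1$ estimate there (and the paper's Lemma~\ref{Hardy-ineq}); your treatment of the horizontal leg via the fundamental theorem of calculus and a change of variables matches the $I_2$ estimate; your $L^p$ trace bound via the fundamental theorem of calculus plus H\"older mirrors Lemma~\ref{L-p-trace}; and your reduction from a bounded $C^1$ domain to the half-space via an extension operator is precisely how the paper derives Corollary~\ref{thrm2} from Theorem~\ref{thrm1} (using Stein's extension). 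One small point: your H\"older step for the $L^p$ bound, as written, needs the weight $y^{-\alpha/(p-1)}$ to be integrable on $(0,1)$, which is problematic at the endpoint $p=1$; there one simply observes $y^\alpha\geq 1$ on $(0,1)$ when $\alpha<0$, so $\int_0^1|g|\,dy\leq\int_0^1|g|y^\alpha\,dy$ directly.
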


The main theme of this paper is to investigate the traces of functions in weighted  mixed-norm spaces $W^1_{\vec{p}, q}(\bR^{d+1}_+, \mu)$ and $W^1_{p, q}(Q, \mu)$ which are defined below, where $\vec{p} \in [1, \infty)^d$ and  $p$ are the  powers in the integrability with respect to the horizontal variables, and $q \in [1, \infty)$ is the power in the integrability with respect to the vertical variable. One obvious motivation is to understand the analysis properties of weighted mixed-norm Sobolev spaces and extend the classical result, Theorem \ref{Nekvinda-thm}, to the anisotropic setting. Another  and more important motivation is to provide an essential ingredient for the study of partial differential equations with  anisotropic structures and with non-homogeneous boundary conditions. 

\smallskip
Regarding the partial differential equations with  anisotropic features, one example is a class of elliptic and parabolic equations studied in \cite{DP21, DP20, DP19}  in which coefficients are singular or degenerate in one variable direction as in \eqref{expl-PDE} below. Existence, uniqueness, and regularity estimates of solutions in weighted mixed-norm spaces are proved in \cite{DP21, DP20, DP19, MNS, STV} for the problems with homogeneous boundary conditions. Using the developed trace theorem (Theorem \ref{thrm1} below), we extend \cite[Theorem 2.8]{DP21}  to the study of non-homogeneous boundary value problem \eqref{expl-PDE}, and our result (Corollary \ref{thrm3} below) gives the optimal regularity conditions on the boundary data for the problem. In this research line, interested readers can find \cite{Lin, Zhang} for other results related to equations with singular-degenerate coefficients, and also \cite{KKL, Nazarov, Krylov} for interesting results on elliptic and parabolic equations in weighted spaces similar to the space $W^1_{\vec{p}, q}(\bR^{d+1}_+, \mu)$ considered in this paper.

\smallskip
Next, we give another example about the partial differential equations with  anisotropic structures. In \cite{MP}, a class fractional elliptic equations with measurable coefficients in divergence form in bounded domain was studied. The main idea in \cite{MP} was to convert the non-local problem into the local on using the extension operator introduced in \cite{Caff-Sil}. To this end, \cite{MP} developed regularity theory for a class of equations with the anisotropic structures as coefficients are singular or degenerate in one variable direction. In one of its main results,  \cite[Theorem 2.1]{MP},  the trace theorem \ref{Nekvinda-thm} plays a central role in the establishment of the regularity estimates of solutions. Because of this, and due to the anisotropic properties due to the extension problem, it is clearly expected that \cite[Theorem 2.1]{MP} is not optimal. In this paper, we establish the trace theorem for the anisotropic weighted Sobolev space $W^{1}_{p, q}(Q, \mu)$ which will provide us an essential ingredient to derive the optimal regularity result for solutions of the class of fractional elliptic equations studied in \cite{MP}. As this application is comprehensive, the work will be carried out in our forthcoming paper.  See also \cite{CT} for another interesting work in which Theorem \ref{Nekvinda-thm} is useful, and also \cite{Caff-Sil} for well-known results on equations with fractional Laplacian. 

\smallskip
Finally, we would like to point out that the interest in this work is also closely related to the recent work \cite{BGT, JHS} in which the traces of functions in un-weighted mixed-norm Lizorkin-Triebel spaces were studied. See also \cite{AKS, T} for other results on traces of functions in Sobolev spaces with radial weights and with Muckenhoupt weights. Also, note that from Theorem \ref{thrm1} and Corollary \ref{thrm2}, the differentiability order of the traces of functions in $W_{\vec{p}, q}^1(\bR^{d+1}_+, \mu)$ or in $W^1_{p, q}(Q, \mu)$ only depends on the weight power $\alpha$ and the integrability power $q$ of the vertical variable,  but not on horizontal integrability powers $\vec{p}$ or $p$, respectively. This phenomena is not easily recognized in Theorem \ref{Nekvinda-thm} and to the best of our knowledge, it seems to be newly discovered in this paper, even in the un-weighted case with $\alpha =0$. See Remark \ref{remark-1-intro} below for more details.

\subsection{Main results} Let us begin with recalling the definitions of the mixed-norm and weighted mixed-norm spaces.  For each $\vec{p} = (p_1, p_2,\ldots, p_d) \in [1, \infty)^d$, the mixed-norm Lebesgue space $L_{\vec{p}}(\bR^d)$ is defined to be the space consisting of all measurable functions $f: \bR^d \rightarrow \bR$ such that its $L_{\vec{p}}(\bR^d)$-norm
\[
\|f\|_{L_{\vec{p}}(\bR^d)} = \left(\int_{\bR} \ldots \left (\int_{\bR}\left( \int_{\bR} |f(x_1, x_2, \ldots, x_d)|^{p_1} dx_1 \right)^{\frac{p_2}{p_1}} dx_2 \right)^{\frac{p_3}{p_2}} \ldots dx_d \right)^{\frac{1}{p_d}}
\]
is finite. Similar definitions can be formed when $p_k =\infty$ for some $k =1, 2,\ldots, d$. The mixed-norm Lebesgue space $L_{\vec{p}}(\bR^d)$ was introduced in \cite{BP} to capture the anisotropic behaviors of functions. Clearly, if $p_1 = p_2 = \ldots = p_d =p$, then $L_{\vec{p}}(\bR^d) = L_p(\bR^d)$, where the later is the usual Lebesgue space. However, in general, there is not a clear relation between the two functional spaces. For example, for a given $\vec{p} = (p_1, p_2, \ldots, p_d) \in [1, \infty)^d$, and for $f_k \in L_{p_k}(\bR)$ for $k = 1, 2, \ldots, d$, we observe that 
$f \in L_{\vec{p}}(\bR^d)$, where
\[
f(x) = f_1(x_1) f_2(x_2) \ldots f_d(x_d), \quad x = (x_1, x_2,\ldots, x_d) \in \bR^d.
\]
However, there may not exist any $p \in [1, \infty)$ such that $f \in L_{p}(\bR^d)$. 

\smallskip
Next,  let us denote $\bR_+ = (0, \infty)$ and $\bR^{d+1}_+ = \bR^{d} \times \bR_+$. For $\vec{p} \in [1, \infty]^d, \alpha \in \bR$, and $q \in [1,\infty)$, the weighted mixed-norm  Lebesgue space $L_{\vec{p}, q}(\bR^{d+1}_+, \mu) $ is defined by 
\[
L_{\vec{p}, q}(\bR^{d+1}_+, \mu) = \{f: \bR^{d+1}_+ \rightarrow \bR: \|f\|_{L_{\vec{p}, q}(\bR^{d+1}_+)} <\infty\}
\]
where $\mu(y) = y^\alpha$ for $y \in \bR_+$ and 
\[
\|f\|_{L_{\vec{p},q}(\bR^{d+1}_+, \mu)} = \left( \int_{\bR_+}\|f(\cdot, y)\|_{L_{\vec{p}}(\bR^d)} ^q y^\alpha dy \right)^{1/q}.
\]
A similar definition is easily formulated when $q =\infty$. As usual, we define the weighted mixed-norm Sobolev space $W^1_{\vec{p}, q}(\bR^{d+1}_+, \mu)$ to be the space consisting of functions $f \in L_{\vec{p}, q}(\bR^{d+1}_+, \mu)$ such that their weak derivatives $Df  = (D_x f, D_y f) \in L_{\vec{p}, q}(\bR^{d+1}_+, \mu)^{d+1}$, and it is endowed with the norm
\[
\|f\|_{W^1_{\vec{p}, q}(\bR^{d+1}_+, \mu)} = \|f\|_{L_{\vec{p},q}(\bR^{d+1}_+, \mu)} + \|Df\|_{L_{\vec{p},q}(\bR^{d+1}_+, \mu)}.
\]

\smallskip
Now, we recall the definition of mixed-norm Besov spaces. For each $h \in \mathbb{R}^d$ and for $f: \bR^d \rightarrow \bR$, let us define the difference $\Delta_h f$ by
\[
\Delta_h f(x) = f(x +h) - f(x), \quad x \in \mathbb{R}^d.
\]
For $\vec{p} \in [1, \infty]^d,  q \in [1, \infty)$ and $\ell \in (0,1)$, the mixed-norm Besov space $B^{\ell}_{\vec{p}, q}(\bR^d)$ is the space consisting of all $f \in L_{\vec{p}}(\bR^d)$ such that its norm
\[
\|f\|_{B^{\ell}_{\vec{p}, q}(\bR^d)} =\|f\|_{L_{\vec{p}}(\bR^d)} + \|f\|_{b^{\ell}_{\vec{p}, q}(\bR^d)} <\infty,
\]
where
\[
\|f\|_{b^{\ell}_{\vec{p},q}(\bR^d)} =\left[ \int_{\bR^d} \Big( \frac{\|\Delta_h f\|_{L_{\vec{p}}(\mathbb{R}^d)}}{|h|^\ell} \Big)^q \frac{d h}{|h|^d} \right]^{\frac{1}{q}}.
\]
Observe that when $\vec{p} = (p, p, \ldots, p)$, $B^{\ell}_{\vec{p}, q}(\bR^d) = B^{\ell}_{p, q}(\bR^d)$, where the later is the usual Besov space.

\smallskip
The following trace theorem is the main result of the paper.
\begin{theorem}[Trace theorem] \label{thrm1} Let $\vec{p} \in [1, \infty)^d$, $q \in [1, \infty)$, $\alpha \in (-1, q-1)$, and $\ell = 1 - \frac{1+\alpha}{q}$. Then, there exists $N = N(d, \vec{p}, q, \alpha)>0$ such that the following assertions hold true.
\begin{itemize}
\item[\textup{(i)}]  There is a trace map $\mathsf{T}: u \in W_{\vec{p}, q}^1(\bR^{d+1}_+ , \mu)  \rightarrow  B^{\ell}_{\vec{p}, q}(\bR^d)$ such that
\begin{equation} \label{trace-est}
\|\mathsf{T}(u)\|_{B^{\ell}_{\vec{p}, q}(\bR^d)} \leq N \|u\|_{W_{\vec{p}, q}^1(\bR^{d+1}_+,\mu)}, \quad \forall \ u \in W_{\vec{p}, q}^1(\bR^{d+1}_+,\mu)
\end{equation}
and $\mathsf{T}(u)(\cdot, 0) = u(\cdot,0)$ if $u \in C(\overline{\bR}^{d+1}_+) \cap  W_{\vec{p}, q}^1(\bR^{d+1}_+,\mu)$.
\item[\textup{(ii)}] There exists an extension map $\oE: B^{\ell}_{\vec{p}, q}(\bR^d) \rightarrow W_{\vec{p}, q}^1(\bR^{d+1}_+, \mu)$ satisfying
\[
\|\oE(g)\|_{W_{\vec{p}, q}^1(\bR^{d+1}_+, \mu)} \leq N \|g\|_{B^{\ell}_{\vec{p}, q}(\bR^d)}, \quad \forall~g \in B^{\ell}_{\vec{p}, q}(\bR^d).
\]
Moreover, for every $g \in B^{\ell}_{\vec{p}, q}(\bR^d)$, $\textup{spt}(\oE(g)(x, \cdot)) \subset [0, 1)$ for all $x \in \bR^d$ and 
\[
\lim_{y\rightarrow 0^+} y^{-\ell} \|\oE(g)(\cdot, y) - g\|_{L_{\vec{p}}(\bR^d)} =0.
\]
\end{itemize}
\end{theorem}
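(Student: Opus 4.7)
The plan is to adapt the classical trace theorem proof to the mixed-norm setting, exploiting the structural fact that Minkowski's inequality in $L_{\vec{p}}(\bR^d)$ pulls horizontal integrations inside, so the anisotropic exponent $\vec{p}$ never interacts with the Besov structure determined by the vertical data $(\alpha, q)$ through the relation $q(1-\ell) = 1+\alpha$.

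For part (i), first reduce to $u \in C_c^\infty(\overline{\bR}^{d+1}_+)$ by density (the one-dimensional weight $y^\alpha$, $\alpha \in (-1, q-1)$, is Muckenhoupt in $y$), and set $\mathsf{T}u(x) = u(x, 0)$. For the Besov seminorm of $\mathsf{T}u$, use the three-term decomposition at intermediate height $y = |h|$:
\[
u(x+h, 0) - u(x, 0) = -\int_0^{|h|}\!\partial_y u(x+h, s)\,ds + \int_0^1 h \cdot D_x u(x+th, |h|)\,dt + \int_0^{|h|}\!\partial_y u(x, s)\,ds.
\]
Take the $L_{\vec{p}}$-norm in $x$ through Minkowski, raise to the $q$-th power, and integrate against $|h|^{-d-\ell q}\,dh$ in polar coordinates $h = r\omega$. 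The middle piece contributes
\[
\int_0^\infty r^{q-\ell q - 1}\|D_x u(\cdot, r)\|^q_{L_{\vec{p}}(\bR^d)}\,dr = \int_0^\infty r^\alpha\|D_x u(\cdot, r)\|^q_{L_{\vec{p}}(\bR^d)}\,dr = \|D_x u\|^q_{L_{\vec{p}, q}(\bR^{d+1}_+, \mu)},
\]
since the choice $\ell = 1 - (1+\alpha)/q$ forces $q - \ell q - 1 = \alpha$. The two vertical pieces are dispatched by the classical Hardy inequality, which is valid when $\ell q > 0$ (i.e.\ $\alpha < q - 1$) and returns $\|\partial_y u\|^q_{L_{\vec{p}, q}(\bR^{d+1}_+, \mu)}$.

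For part (ii), construct a convolution-based extension: pick $\psi \in C_c^\infty(B_1)$ with $\int \psi = 1$ and $\eta \in C_c^\infty([0, 1))$ with $\eta \equiv 1$ near $y = 0$, and set $\oE(g)(x, y) = \eta(y)(\psi_y * g)(x)$ with $\psi_y(z) = y^{-d}\psi(z/y)$. Young's inequality together with $\alpha > -1$ controls $\|\oE(g)\|_{L_{\vec{p}, q}(\bR^{d+1}_+, \mu)}$. For the gradient, exploit the vanishing-moment identities $\int D_i\psi = 0$ and $\int\partial_y\psi_y = 0$ (the latter from $\int\psi_y \equiv 1$) to express the derivatives as weighted averages of increments,
\[
D_{x_i}(\psi_y * g)(x) = y^{-1}\int (D_i\psi)(z)\,\Delta_{-yz}g(x)\,dz,
\]
and analogously for $\partial_y(\psi_y * g)$. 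Taking $L_{\vec{p}}$-norm via Minkowski, raising to the $q$-th power, integrating against $y^\alpha\,dy$ on $(0,1)$, and substituting $r = y|z|$ together with Fubini converts the estimate into the polar form of $\|g\|^q_{b^\ell_{\vec{p}, q}(\bR^d)}$. The convergence $y^{-\ell}\|\oE(g)(\cdot, y) - g\|_{L_{\vec{p}}} \to 0$ then follows from the same Minkowski-polar bound applied to $\psi_y * g - g$, together with density of smooth functions in $B^\ell_{\vec{p}, q}(\bR^d)$. The main obstacle throughout is bookkeeping: every Minkowski step must place the $L_{\vec{p}}$-norm innermost, so the horizontal exponents disappear before the polar-coordinate change and Hardy inequality are invoked, which is exactly what makes $\ell$ independent of $\vec{p}$.
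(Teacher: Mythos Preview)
Your argument for part (i) is essentially the paper's own: the same three-term decomposition at height $|h|$, the same polar reduction giving the exponent $q(1-\ell)-1=\alpha$ on the tangential piece, and the same Hardy-inequality treatment of the two vertical pieces. (The paper also separates out the bound $\|u(\cdot,0)\|_{L_{\vec p}}\le N\|u\|_{W^1_{\vec p,q}}$ as a preliminary lemma; you should state this explicitly, but it is the obvious fundamental-theorem-plus-H\"older computation.)

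For part (ii) you take a genuinely different route. The paper builds the extension via a \emph{dyadic} (Burenkov) operator
\[
\oE(g)(x,y)=\sum_{k\ge 1}\psi_k(y)\,(\varphi_{2^{-k}}*g)(x),
\]
with $\{\psi_k\}$ a partition of unity adapted to the dyadic intervals $(2^{-k-1},2^{-k})$, and then controls $D\oE(g)$ through the discrete equivalent norm $\big(\sum_k 2^{k\ell q}\omega(2^{-k},g)_{\vec p}^{\,q}\big)^{1/q}$. Your \emph{continuous} choice $\oE(g)(x,y)=\eta(y)(\psi_y*g)(x)$ is more elementary: the moment conditions $\int D_i\psi=0$ and $\int\partial_y\psi_y=0$ immediately yield $\|D\oE(g)(\cdot,y)\|_{L_{\vec p}}\lesssim y^{-1}\omega(y,g)_{\vec p}$, and integrating against $y^{\alpha}\,dy$ lands on the continuous equivalent norm $\int_0^1(\omega(t,g)_{\vec p}/t^{\ell})^q\,dt/t$. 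Either way one needs the equivalence of these moduli-of-continuity norms with the defining Besov norm (the paper proves this as a separate lemma; you are using it implicitly). The dyadic construction has the minor advantage that the limit $y^{-\ell}\|\oE(g)(\cdot,y)-g\|_{L_{\vec p}}\to 0$ drops out of the fact that the terms of a convergent series tend to zero; in your version the same conclusion follows because $\omega(y,g)_{\vec p}/y^{\ell}\to 0$, which again is a consequence of that equivalent-norm lemma rather than of density per se.
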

\begin{remark} \label{remark-1-intro} As $\ell$ only depends on $\alpha$ and $q$, but not on $\vec{p}$, we see that the differentiability order of the traces of functions in $W_{\vec{p}, q}^1(\bR^{d+1}_+, \mu)$ only depends on $\alpha$ and $q$. This phenomena does not seem to be seen before even in the unweighted case (i.e. when $\alpha =0$) and with $d=1$. When $\vec{p} = (q, q, \ldots, q)$, Theorem \ref{thrm1} is reduced to the classical results, see \cite[Theorem 2.8]{Nekvinda}, \cite[Theorem 2.10] {DK}, and \cite[Theorem 2.8, p. 100]{Necas}. We also remark that when $\alpha \leq -1$, then it follows from Lemma \ref{L-p-trace} below that $\mathsf{T}(u) \equiv 0$ for any $u \in W_{\vec{p}, q}^1(\bR^{d+1}_+,\mu)$ for any $\vec{p} \in [1, \infty)^d$ and $q \in [1, \infty)$.  
\end{remark}
\smallskip
We next give two applications of Theorem \ref{thrm1} which also demonstrates some motivations for this study.  In the first application, we study a class of second order elliptic equations in divergence form with coefficients that can be singular or degenerate near the boundary of the domains.  Let $(a_{ij}): \bR^{d+1}_+ \rightarrow \bR^{(d+1)^2}$ be the $(d+1) \times (d+1)$ matrix of measurable functions satisfying the uniformly elliptic and boundedness condition: there is $\nu \in (0,1)$ such that
\begin{equation} \label{ellipticity}
a_{ij}(x,y) \xi_i \xi_j \geq \nu |\xi|^2 \quad \text{and} \quad |a_{ij}(x,y)| \leq \nu^{-1}, 
\end{equation}
for all $(x,y) \in \bR^{d+1}_+$ and for all $\xi =(\xi_1, \xi_2,\cdots, \xi_{d+1}) \in \bR^{d+1}$. For each $\lambda >0$, let us denote the operator  
\[
\sL u(x,y) = - D_i\big(y^\beta a_{ij}(x,y) D_j u(x,y) \big) + \lambda y^\beta u(x,y) , \]
where $\beta \in (-\infty,1)$ is a fixed number, and 
\[
D_i u (x,y) = \frac{\partial u}{\partial x_i}(x,y) \quad \text{for} \quad i =1, 2, \ldots, d \quad \text{and} \quad D_{d+1} u(x,y) = \frac{\partial u}{\partial y}(x,y)
\]
for $(x,y) \in \bR^d \times \bR_+$. Note that  in the above the Enstein's summation convention is used. 

\smallskip
As $\beta \in (-\infty, 1)$, the leading coefficients $y^\beta a_{ij}(x,y)$ in $\sL$ can be singular or degenerate on the boundary $\partial \bR^{d+1}_+$ of  $\bR^{d+1}_+$. When $\beta =0$, the operator $\sL$ is reduced to the standard second order partial differential operator with bounded and uniformly elliptic coefficients. When $\beta \in (-1,1)$ and $(a_{ij})$ is the identity matrix, the operator $\sL$ appears in the study of fractional Laplace equations, see \cite{Caff-Sil} for instance. Interested readers may find in \cite{Lin, Zhang},  for instance, for some other work related to problems in geometric and probability in which equations with singular and degenerate coefficients also appear. 

\smallskip
We study the non-homogeneous boundary value problem
\begin{equation} \label{expl-PDE}
\left\{
\begin{array}{cccl}
\sL u(x,y)  & = & D_i (y^\beta F_i) + \sqrt{\lambda} y^\beta f & \quad (x, y) \in  \bR^{d} \times \bR_+\\
u (x, 0) & =& g(x) & \quad x \in  \bR^{d},
\end{array} \right. 
\end{equation}
where  $F_i : \bR^{d+1}_+ \rightarrow \bR$ is a given measurable function for $i =1, 2,\ldots, d+1$, and $f : \bR^{d+1}_+ \rightarrow \bR$ is a given measurable function. To state our result for \eqref{expl-PDE}, we need to impose a weighted partial VMO condition on the coefficients $(a_{ij})$. For every $r>0$ and $x_0 \in \bR^d, y_0 \in [0, \infty)$, we denote
\[
D_r(x_0, y_0) = \big (B_r(x_0) \times (y_0 -r, y_0 +r) \big)\cap \bR^{d+1}_+
\]
where $B_r(x_0)$ is the ball in $\bR^d$ of radius $r$ and centered at $x_0$. We also denote the  measure $\mu_1(dxdy) = y^{-\beta} dx dy$ and
\[
  \fint_{D_r(x_0,y_0)} f(x,y) \mu_1(dxdy)= \frac{1}{\mu_1(D_r(x_0,y_0))} \int_{D_{r(x_0,y_0)}} f(x,y) \mu_1(dxdy)
\]
where
\[
\mu_1(D_r(x_0,y_0)) = \int_{D_r(x_0,y_0)} \mu_1(dxdy).
\]
 Also, we denote the average of $a_{ij}$ in $B_r(x_0)$ by
\[
[a_{ij}]_{r, x_0}(y) = \frac{1}{|B_r(x_0)|}\int_{B_r(x_0)} a_{ij}(x, y) dx, \quad y \in \bR_+.
\]
Then, the partially bounded mean oscillations of the matrix $a = (a_{ij})$ with respect to the weight $\mu_1$ in $D_r(x_0, y_0)$ is defined by
\[
a^{\#}_r(x_0,y_0) = \max_{i,j =1,2,\ldots, d+1} \fint_{D_{r}(x_0, y_0)}|a_{ij}(x,y) -[a_{ij}]_{r, x_0}(y)| \mu_1(dxdy).
\]
Recall that $\mu(dxdy) = y^{\alpha} dx dy$. For every $\vec{p} \in (1, \infty)^d$, $q \in (1, \infty)$, $\alpha \in (-1, q-1)$, and with $g \in B_{\vec{p},q}^{\ell}(\bR^d)$ with $\ell = 1-\frac{1+\alpha}{q}$, we say that $u \in W^{1}_{\vec{p}, q}(\bR^{d+1}, \mu)$ is a weak solution to \eqref{expl-PDE} if
\[\int_{\bR^{d+1}_+} y^{\beta}\big[a_{ij} D_j u D_j \varphi + F_i D_i\varphi + \lambda u \varphi\big] dxdy = \sqrt{\lambda} \int_{\bR^{d+1}_+} f(x,y) y^\beta dxdy\]
for all smooth, compactly supported function $\varphi: \bR^{d+1}_+ \rightarrow \bR$,  and $u (x,0) = g(x)$ in sense of trace. Now, our result regarding \eqref{expl-PDE} is stated in the following theorem.
\begin{corollary} \label{thrm3} Let $\vec{p} \in (1, \infty)^d$, $q \in (1, \infty)$, $\beta \in (-\infty, 1)$,  $\alpha \in (-1, q-1)$,  $\ell = 1- \frac{1+\alpha}{q}$, $\nu \in (0,1)$, and $R_0 \in (0, \infty)$. There exist sufficiently small positive number $\delta~=~ \delta(d, \vec{p}, q, \beta, \alpha, \nu)$ and a sufficiently large positive number $\lambda_0~=~\lambda_0(d, \vec{p}, q, \beta, \alpha, \nu)$ such that the following assertions hold. Suppose that \eqref{ellipticity} and
\begin{equation} \label{VMO-cond}
\sup_{(x_0, y_0) \in \overline{\bR}^{d+1}_+}\sup_{r\in (0, R_0)} a^{\#}_r(x_0,y_0) \leq \delta
\end{equation}
hold. If $u \in W^{1}_{\vec{p}, q}(\bR^{d+1}, \mu)$ is a weak solution of \eqref{expl-PDE} with $f \in L_{\vec{p},q}(\bR^{d+1}_+, \mu), F \in L_{\vec{p},q}(\bR^{d+1}_+, \mu)^{d+1}$, $g \in B^{\ell}_{\vec{p}, q}(\bR^d)$, and $\lambda\geq \lambda_0 R_0^{-2}$, then
\[
\begin{split}
& \|Du\|_{L_{\vec{p}, q}(\bR^{d+1}_+, \mu)} + \sqrt{\lambda} \|u\|_{L_{\vec{p}, q}(\bR^{d+1}_+, \mu)} \\
&\leq N\Big[ \|F\|_{L_{\vec{p}, q}(\bR^{d+1}_+, \mu)} + \|f\|_{L_{\vec{p}, q}(\bR^{d+1}_+, \mu)}  \Big] \\
& \qquad + N\Big[\|D\oE(g)\|_{L_{\vec{p}, q}(\bR^{d+1}_+, \mu)} + \sqrt{\lambda} \|\oE(g)\|_{L_{\vec{p}, q}(\bR^{d+1}_+, \mu)} \Big],
\end{split}
\]
where $N = N(d, \vec{p}, q, \beta, \alpha, \nu) >0$ and $\oE$ is defined in Theorem \ref{thrm1}. Moreover, for every $f \in L_{\vec{p},q}(\bR^{d+1}_+, \mu), F \in L_{\vec{p},q}(\bR^{d+1}_+, \mu)^{d+1}$, $g \in B^{\ell}_{\vec{p}, q}(\bR^d)$, and $\lambda \geq \lambda_0 R_0^{-2}$, there exists a unique weak solution $u \in W^{1}_{\vec{p}, q}(\bR^{d+1}, \mu)$ to \eqref{expl-PDE}.
\end{corollary}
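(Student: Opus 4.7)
The plan is to use the extension operator $\oE$ from Theorem \ref{thrm1}(ii) to reduce the non-homogeneous boundary value problem \eqref{expl-PDE} to the corresponding homogeneous one, for which the a priori estimate and solvability are already available via \cite[Theorem 2.8]{DP21}.

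First I set $G = \oE(g)$, which by Theorem \ref{thrm1}(ii) belongs to $W^{1}_{\vec{p},q}(\bR^{d+1}_+,\mu)$ with
\[
\|G\|_{W^{1}_{\vec{p},q}(\bR^{d+1}_+,\mu)} \leq N\, \|g\|_{B^{\ell}_{\vec{p},q}(\bR^d)},
\]
and has trace $g$ on $\bR^d$. Define $v = u - G$; then $v$ has zero trace, and a direct computation using $\sL G = -D_i(y^\beta a_{ij} D_j G) + \lambda y^\beta G$ shows that $v$ solves
\[
\sL v = D_i\bigl(y^\beta \tilde F_i\bigr) + \sqrt{\lambda}\, y^\beta \tilde f \quad \text{in } \bR^{d+1}_+, \qquad v(\cdot,0) = 0,
\]
where $\tilde F_i = F_i + a_{ij}D_j G$ and $\tilde f = f - \sqrt{\lambda}\, G$. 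By $|a_{ij}| \leq \nu^{-1}$ and the bound on $\|G\|_{W^{1}_{\vec{p},q}}$, both $\tilde F$ and $\tilde f$ lie in $L_{\vec{p},q}(\bR^{d+1}_+,\mu)$.

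Next I apply the a priori estimate of \cite[Theorem 2.8]{DP21} to $v$, which is valid under \eqref{ellipticity} and \eqref{VMO-cond} provided $\delta$ is sufficiently small and $\lambda \geq \lambda_0 R_0^{-2}$. It yields
\[
\|Dv\|_{L_{\vec{p},q}(\bR^{d+1}_+,\mu)} + \sqrt{\lambda}\,\|v\|_{L_{\vec{p},q}(\bR^{d+1}_+,\mu)} \leq N\bigl[\|\tilde F\|_{L_{\vec{p},q}(\bR^{d+1}_+,\mu)} + \|\tilde f\|_{L_{\vec{p},q}(\bR^{d+1}_+,\mu)}\bigr].
\]
Triangle inequality together with $|a_{ij}| \leq \nu^{-1}$ then gives $\|\tilde F\|_{L_{\vec{p},q}(\bR^{d+1}_+,\mu)} \leq \|F\|_{L_{\vec{p},q}(\bR^{d+1}_+,\mu)} + N(d,\nu)\|DG\|_{L_{\vec{p},q}(\bR^{d+1}_+,\mu)}$ and $\|\tilde f\|_{L_{\vec{p},q}(\bR^{d+1}_+,\mu)} \leq \|f\|_{L_{\vec{p},q}(\bR^{d+1}_+,\mu)} + \sqrt{\lambda}\,\|G\|_{L_{\vec{p},q}(\bR^{d+1}_+,\mu)}$, and combining this with $u = v + G$ and one final triangle inequality produces the stated estimate with $\oE(g)$ in place of $G$.

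For the existence part, given data $F,f,g$, I would set $G = \oE(g)$, form $\tilde F, \tilde f$ as above in $L_{\vec{p},q}(\bR^{d+1}_+,\mu)$, and use the solvability half of \cite[Theorem 2.8]{DP21} to produce $v \in W^{1}_{\vec{p},q}(\bR^{d+1}_+,\mu)$ with zero trace solving the homogeneous problem; then $u = v + G$ is a weak solution of \eqref{expl-PDE} whose trace equals $g$ in the sense of Theorem \ref{thrm1}. Uniqueness follows since the difference of two such solutions has zero trace, $F = 0$ and $f = 0$, and hence vanishes by the homogeneous uniqueness in \cite[Theorem 2.8]{DP21}. There is no substantive obstacle beyond the reduction; the conceptual point is that Theorem \ref{thrm1}(ii) delivers an extension in exactly the same weighted mixed-norm Sobolev space used in \cite[Theorem 2.8]{DP21}, which is precisely what makes the terms $\|D\oE(g)\|_{L_{\vec{p},q}(\bR^{d+1}_+,\mu)}$ and $\sqrt{\lambda}\|\oE(g)\|_{L_{\vec{p},q}(\bR^{d+1}_+,\mu)}$ the natural, and indeed optimal, right-hand side contribution corresponding to the boundary datum $g$.
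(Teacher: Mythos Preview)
Your proposal is correct and follows essentially the same approach as the paper: set $v=\oE(g)$, subtract it from $u$ to reduce to the homogeneous Dirichlet problem with modified data $\tilde F_i = F_i + a_{ij}D_j v$ and $\tilde f = f - \sqrt{\lambda}\,v$, and then invoke \cite[Theorem 2.8]{DP21} (together with \cite[Remark 2.7]{DP21}) for the a priori estimate, existence, and uniqueness. The only differences are notational.
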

We remark that the problem \eqref{expl-PDE} when $g \equiv 0$ is studied in \cite{DP21} for both elliptic and parabolic cases. Corollary \ref{thrm3} therefore recovers \cite[Theorem 2.8]{DP21} for  \eqref{expl-PDE} when $g\equiv 0$. Even when $\vec{p} = (p_1, p_2, \ldots, p_d)$ with $p_1= p_2 = \ldots = p_d$ and $\beta =0$, Corollary \ref{thrm3} seems to be new as it deals with non-homogeneous boundary value problems in weighted mixed-norm spaces. We also point out that when $\alpha \leq -1$, to search for a solution in $W^{1}_{\vec{p}, q}(\bR^{d+1}_+, \mu)$ of \eqref{expl-PDE}, it requires that $g \equiv 0$ as pointed out in Remark \ref{remark-1-intro}. In fact, when $\alpha \in (q\beta - 1, -1]$ and $g \equiv 0$, the existence and uniqueness of weak solutions $W^{1}_{\vec{p}, q}(\bR^{d+1}_+, \mu)$ can be obtained from \cite{DP21}.

\smallskip
In the second application of Theorem \ref{thrm1}, we prove the trace theorem for $W^1_{p,q}(\Omega \times (0,1), \mu)$ where $\Omega \subset \bR^d$ is open and bounded domain with Lipschitz boundary $\partial \Omega$. As before, for $p, q \in [1, \infty)$ and $Q=\Omega \times (0,1)$, let us denote $L_{p, q}(Q, \mu)$ be the weighed mixed normed Lebesgue space consisting of measurable functions $f: Q \rightarrow \mathbb{R}$ such that
\[
\|f\|_{L_{p,q}(Q, \mu)} = \left( \int_0^1 \left( \int_{\Omega} |f(x, y)|^p dx\right)^{\frac{q}{p}} y^\alpha dy \right)^{1/q} <\infty.
\]
Similar to the weighted mixed-norm Sobolev space $W^1_{\vec{p}, q}(\bR^{d+1}_+, \mu)$, let us denote the weighted mixed-norm Sobolev space $W^1_{p,q}(Q, \mu)$ by
\[
W^1_{p,q}(Q, \mu) = \Big \{f \in L_{p,q}(Q, \mu): Df \in L_{p,q}(Q, \mu) \Big\}
\]
and it is endowed with the norm
\[
\|f\|_{W^1_{p,q}(Q, \mu)} = \|f\|_{L_{p,q}(Q, \mu)} + \|Df\|_{L_{p,q}(Q, \mu)}.
\]
For $\ell \in (0,1)$, and $p, q \in [1, \infty)$, we define the Besov space $B^{\ell}_{p,q}(\Omega)$ as
\[
B^{\ell}_{p,q}(\Omega) =\Big\{ f \in L_p(\Omega): \exists\ g \in B^{\ell}_{p,q}(\bR^d): g_{|\Omega} = f   \Big\}
\]
and it is endowed with the norm
\[
\|f\|_{B^{\ell}_{p,q}(\Omega)} = \inf\Big\{\|g\|_{B^{\ell}_{p,q}(\bR^d)} \ \text{for} \ g \in B^{\ell}_{p,q}(\bR^d): g_{|\Omega} = f   \Big\}.
\]
Interested readers can find  \cite{DS, VR, Trib1} and the monograph \cite{Trib} for more details on Besov spaces in bounded domains. Note  that it is possible to use the vector $\vec{p}$ instead of the scaler $p$ in the definition $B^{\ell}_{p,q}(\Omega)$, however we do not pursue this to avoid complication in writing the mixed-norm of functions in $L_{\vec{p}}(\Omega)$  due to the geometry of $\Omega$. 

\smallskip
Our trace theorem for weighted mixed-norm space $W^1_{p, q}(Q, \mu)$ with $Q=  \Omega \times (0,1)$ and $\mu(y) = y^\alpha$ for $y \in (0,1)$ is now stated below.
\begin{corollary} \label{thrm2} Assume $\Omega \subset \bR^d$ is an open bounded domain with Lipschitz boundary $\partial \Omega$ and $Q = \Omega \times (0,1)$. Assume also that $p, q \in [1, \infty)$, $\alpha \in (-1, q-1)$ and $\ell = 1 -\frac{1+\alpha}{q}$.  There exists a linear operator 
$$\mathsf{T}_{\Omega}: u \in W_{p, q}^1(Q , \mu)  \rightarrow  B^{\ell}_{p, q}(\Omega)$$
such that  $\mathsf{T}_{\Omega}(u)(\cdot, 0) = u(\cdot,0)$ if $u \in C(\Omega \times [0, 1)) \cap  W_{p, q}^1(Q,\mu)$ and
\begin{equation*} 
\|\mathsf{T}_{\Omega}(u)\|_{B^{\ell}_{p, q}(\Omega)} \leq N \|u\|_{W_{p, q}^1(Q,\mu)}, \quad \forall \ u \in W_{p, q}^1(Q,\mu),
\end{equation*}
where $N = N(p, q, d, \alpha, \Omega)>0$ and $\mu(y) = y^\alpha$ for $y \in (0,1)$.
\end{corollary}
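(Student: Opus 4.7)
The strategy is to reduce Corollary \ref{thrm2} to Theorem \ref{thrm1} by first extending in the horizontal variable from $\Omega$ to $\bR^d$ and then cutting off in the vertical variable so that the resulting function can be viewed as an element of $W^1_{p,q}(\bR^{d+1}_+, \mu)$.

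Because $\partial\Omega$ is Lipschitz, there exists a bounded linear extension operator $\oE_{\Omega}: W^{1,p}(\Omega) \rightarrow W^{1,p}(\bR^d)$ that is simultaneously bounded from $L^{p}(\Omega)$ into $L^{p}(\bR^d)$ and satisfies $(\oE_{\Omega} v)|_{\Omega} = v$ (a Stein-type total extension). Given $u \in W^1_{p,q}(Q, \mu)$, I would define $\tilde u(x,y) = (\oE_{\Omega} u(\cdot,y))(x)$ for a.e.\ $y \in (0,1)$. Because $\oE_{\Omega}$ acts only on the $x$-variable and is linear with $y$-independent norm, it commutes with $D_y$, and Fubini's theorem combined with the boundedness properties of $\oE_{\Omega}$ gives $\tilde u \in W^1_{p,q}(\bR^d \times (0,1), \mu)$ together with the bound $\|\tilde u\|_{W^1_{p,q}(\bR^d \times (0,1),\mu)} \leq N\|u\|_{W^1_{p,q}(Q,\mu)}$. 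I would then pick a smooth cutoff $\eta \in C_c^{\infty}([0,1))$ with $\eta \equiv 1$ on $[0,1/2]$ and set $v = \eta \tilde u$, extended by zero for $y \geq 1$. A product-rule estimate yields $v \in W^1_{p,q}(\bR^{d+1}_+,\mu)$ with norm controlled by that of $\tilde u$. Applying Theorem \ref{thrm1}(i) to $v$ produces a trace $\mathsf{T}(v) \in B^\ell_{p,q}(\bR^d)$ with $\|\mathsf{T}(v)\|_{B^\ell_{p,q}(\bR^d)} \leq N \|v\|_{W^1_{p,q}(\bR^{d+1}_+,\mu)}$, and I would define $\mathsf{T}_{\Omega}(u) := \mathsf{T}(v)|_{\Omega}$. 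By the very definition of $B^\ell_{p,q}(\Omega)$ as the restriction space, chaining the bounds gives the desired estimate.

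For the consistency statement $\mathsf{T}_{\Omega}(u)(\cdot) = u(\cdot,0)$ on $\Omega$ when $u \in C(\Omega \times [0,1)) \cap W^1_{p,q}(Q,\mu)$: the identity property of $\oE_{\Omega}$ yields $\tilde u(x,y) = u(x,y)$ for $x \in \Omega$, so that $v(x,y) = \eta(y) u(x,y)$ on $\Omega \times (0,1)$. Continuity of $u$ up to $\Omega \times \{0\}$ forces $\lim_{y \rightarrow 0^+} v(x,y) = u(x,0)$ pointwise on $\Omega$, and combined with Theorem \ref{thrm1}(i) (applied either directly after a smooth truncation or, in the merely continuous case, after a brief mollification-in-$y$ approximation and passage to the limit using the norm bound) this identifies the trace with $u(\cdot,0)$ on $\Omega$.

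The main technical obstacle is verifying that the fiberwise extension $\tilde u$ indeed belongs to the weighted mixed-norm Sobolev space with the stated bound; specifically one must check measurability of $y \mapsto \oE_{\Omega} u(\cdot,y)$ into $W^{1,p}(\bR^d)$ and the commutation $D_y \tilde u = \oE_{\Omega}(D_y u)$. Both follow from the fact that $\oE_{\Omega}$ is a bounded linear operator independent of $y$, but they must be handled carefully since the Stein extension on Lipschitz domains is not given by a simple reflection formula. Once this Fubini-type reduction is in place, the remaining steps are routine applications of Theorem \ref{thrm1} and the product rule.
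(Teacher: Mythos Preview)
Your proposal is correct and follows essentially the same route as the paper: Stein extension in the horizontal variable, a smooth cutoff in the vertical variable to pass from $(0,1)$ to $\bR_+$, and then an application of Theorem~\ref{thrm1}(i) together with the definition of $B^{\ell}_{p,q}(\Omega)$ as a restriction space. The only cosmetic difference is that the paper applies the cutoff before the Stein extension rather than after, which is immaterial since the two operations act on separate variables; your more careful discussion of measurability and the commutation $D_y\tilde u = \oE_\Omega(D_y u)$ fills in details the paper leaves implicit.
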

As previously mentioned, Corollary \ref{thrm2} provides key ingredient for the study of the regularity in Besov spaces of solutions to the class of non-local elliptic equations with measurable coefficients and this will be carried out in our forthcoming paper. When $p =q$, Corollary \ref{thrm2} is reduced to the classical result, Theorem \ref{Nekvinda-thm}. As in Theorem \ref{thrm1}, it is interesting to note that $\ell$ only depends on $\alpha, q$ but not on $p$, the phenomena is not implied by Theorem \ref{Nekvinda-thm}. It is possible to extend Theorem \ref{thrm1} and Corollary \ref{thrm2} to higher-ordered $W^k_{p, q}$-weighted mixed-norm Sobolev spaces with $k \in \mathbb{N}$ and more general bounded domains as in \cite[Theorem 2.10] {DK}. However, we do no pursue this direction to avoid the technical complexity in this paper. 

\smallskip
The remaining parts of the paper is organized as follows. In Section \ref{preli-sess}, several preliminary estimates and inequalities needed to prove  Theorem \ref{thrm1} are recalled and proved. Section \ref{Sec-2} is devoted to the proof of Theorem \ref{thrm1}. The proofs of Corollary \ref{thrm3} and Corollary \ref{thrm2} will be given in Section \ref{Sec-3}.

\noindent
\section{Preliminary estimates and inequalities} \label{preli-sess}
\subsection{Hardy's inequalities}
We begin with the following weighted Hardy's inequality which is useful in the paper.
\begin{lemma}[Hardy's inequality] \label{Hardy-ineq} Let $q \in [1, \infty)$, $a \in (0, \infty]$, and $\sigma < 1 - \frac{1}{q}$. Then, we have
\[
\left( \int_0^a t^{q \sigma} \Big | \frac{1}{t} \int_0^t f(s) ds \Big|^q dt \right)^{\frac{1}{q}} \leq \Big(1 - \frac{1}{q} -\sigma\Big)^{-1} \left( \int_0^a |f(t)|^q  t^{q\sigma} dt \right)^{\frac{1}{q}},
\]
for every measurable function $f: (0, a) \rightarrow \mathbb{R}$. Similar statement also holds when $q =\infty$.
\end{lemma}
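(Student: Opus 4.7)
The plan is to reduce the weighted Hardy inequality to a short Minkowski-type argument that simultaneously delivers the sharp constant $(1 - 1/q - \sigma)^{-1}$ announced in the statement. Without loss of generality I replace $f$ by $|f|$ and so assume $f \geq 0$ throughout.

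The key step is to rewrite the Hardy average using the homogeneity of Lebesgue measure. By the substitution $s = tr$ with $r \in (0,1)$ one has
\[
\frac{1}{t}\int_0^t f(s)\, ds = \int_0^1 f(tr)\, dr,
\]
which decouples the averaging (an $r$-integral) from the weighted norm in $t$. Taking the $L^q\!\left((0,a),\, t^{q\sigma}\, dt\right)$ norm in $t$ and applying Minkowski's integral inequality to move the $L^q$ norm inside the $dr$ integral gives
\[
\left(\int_0^a t^{q\sigma} \Bigl(\int_0^1 f(tr)\, dr\Bigr)^{q} dt\right)^{1/q} \leq \int_0^1 \left(\int_0^a f(tr)^q\, t^{q\sigma}\, dt\right)^{1/q} dr.
\]

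For each fixed $r \in (0,1)$, the substitution $u = tr$ computes the inner $t$-integral as $r^{-q\sigma - 1} \int_0^{ar} f(u)^q u^{q\sigma}\, du$. Taking $q$-th roots and using $(0, ar) \subset (0, a)$ (recall $r \leq 1$) gives the bound $r^{-\sigma - 1/q}\, \|f\|_{L^q((0,a),\, u^{q\sigma} du)}$. Integrating the resulting factor over $r \in (0,1)$ produces
\[
\int_0^1 r^{-\sigma - 1/q}\, dr = \frac{1}{1 - \sigma - 1/q},
\]
which is finite precisely under the hypothesis $\sigma < 1 - 1/q$ and matches the claimed constant. For $q = \infty$ I would argue directly from the pointwise bound $|f(s)| \leq s^{-\sigma}\, \|s^\sigma f\|_{L^\infty}$, integrate it over $(0,t)$, and divide by $t$ to pick up the factor $(1 - \sigma)^{-1}$.

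I do not expect any real obstacle; this is a standard scaling argument. The only care needed is to justify Minkowski's integral inequality in this weighted setting, which is immediate since the weight $t^{q\sigma}\, dt$ is $\sigma$-finite on $(0,a)$, and to track the exponents in the change of variables so that the constant comes out exactly as stated.
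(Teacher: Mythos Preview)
Your proof is correct and follows essentially the same approach as the paper: both use the substitution $s = tr$ to rewrite the Hardy average as $\int_0^1 f(tr)\,dr$, apply Minkowski's inequality to move the weighted $L^q$ norm inside, change variables again to extract the factor $r^{-\sigma - 1/q}$, and integrate over $r \in (0,1)$ to obtain the sharp constant. Your sketch of the $q=\infty$ case is a small addition beyond what the paper provides.
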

\begin{proof} The proof is standard and we give it here for completeness. We only consider the case $q<\infty$. Note that with the change of variable $ s = ty$, we have
\[
\left( \int_0^a t^{q \sigma} \Big | \frac{1}{t} \int_0^t f(s) ds \Big|^q dt \right)^{\frac{1}{q}} \leq \left( \int_0^a t^{q \sigma} \Big( \int_0^1 |f(t y)| dy \Big)^q dt \right)^{\frac{1}{q}}.
\]
Then, it follows from Minkowski's inequality that
\[
\begin{split}
\left( \int_0^a t^{q \sigma} \Big | \frac{1}{t} \int_0^t f(s) ds \Big|^q dt \right)^{\frac{1}{q}} & \leq \int_0^1  \Big (  \int_0^a |f(t y)|^q t^{q \sigma}dt \Big)^{\frac{1}{q}} dy \\
& = \int_0^1  y^{-\sigma -\frac{1}{q}} dy \left(  \int_0^{ay} |f(\tau)|^q \tau^{q \sigma}d\tau \right)^{\frac{1}{q}} \\
& \leq \left(  \int_0^{a} |f(\tau)|^q \tau^{q \sigma}d\tau \right)^{\frac{1}{q}} \int_0^1  y^{-\sigma -\frac{1}{q}} dy \\
& =\Big(1 - \frac{1}{q} -\sigma \Big)^{-1} \left(  \int_0^a |f(\tau)|^q \tau^{q \sigma}d\tau \right)^{\frac{1}{q}},
\end{split}
\]
where we have used the change of variable $ \tau = ty$ in the second step of the above estimate. The assertion is proved.
\end{proof}
The following consequence of Lemma \ref{Hardy-ineq} is needed in this paper.
\begin{corollary} \label{Hardy-cor} Let $\theta \in [1, \infty), a \in (0, \infty]$ and $\beta < d -\frac{1}{\theta}$. Then, there is a constant $N = N(d, \beta, \theta)>0$ such that
\[
\left(\int_0^a \Big( t^{\beta -d} \int_{|x| \leq t} |f(x)| dx \Big)^\theta dt \right)^{1/\theta} \leq N \left(\int_{|x| \leq a}\Big( |x|^{\beta -\frac{d-1}{\theta}}|f(x)|\Big)^\theta dx \right)^{1/\theta},
\]
for every measurable function $f: \mathbb{R}^d \rightarrow \mathbb{R}$. Similar statement also holds when $\theta = \infty$.
\end{corollary}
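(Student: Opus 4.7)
\medskip
\noindent
\textbf{Proof proposal.} The plan is to reduce the $d$-dimensional ball inequality to the one-dimensional Hardy inequality of Lemma \ref{Hardy-ineq} via polar coordinates, then pass back from a radial integrand to the full $L^\theta$-norm of $f$ by H\"older's inequality on the sphere.

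First, I would write the inner integral in spherical coordinates: setting
\[
g(s) = s^{d-1}\int_{S^{d-1}} |f(s\omega)|\, d\sigma(\omega), \qquad s \in (0, a),
\]
we have $\int_{|x|\le t} |f(x)|\,dx = \int_0^t g(s)\,ds$, so the left-hand side becomes
\[
\left(\int_0^a t^{\theta(\beta -d+1)}\Big( \tfrac{1}{t}\int_0^t g(s)\,ds \Big)^{\theta} dt\right)^{1/\theta}.
\]
Applying Lemma \ref{Hardy-ineq} with exponent $q = \theta$ and weight exponent $\sigma = \beta - d + 1$ (the hypothesis $\beta < d - 1/\theta$ is precisely $\sigma < 1 - 1/\theta$), this is bounded by
\[
\Bigl(1 - \tfrac{1}{\theta} - (\beta-d+1)\Bigr)^{-1}\left(\int_0^a g(t)^{\theta} t^{\theta(\beta - d + 1)} dt\right)^{1/\theta}.
\]

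Next I would bound $g(t)^\theta$ pointwise in $t$ by H\"older's inequality on the unit sphere (finite measure $\omega_{d-1} := \sigma(S^{d-1})$):
\[
g(t)^{\theta} = t^{\theta(d-1)} \Bigl(\int_{S^{d-1}} |f(t\omega)|\, d\sigma(\omega)\Bigr)^{\theta} \le \omega_{d-1}^{\theta-1}\, t^{\theta(d-1)} \int_{S^{d-1}} |f(t\omega)|^{\theta}\, d\sigma(\omega).
\]
Multiplying by $t^{\theta(\beta-d+1)}$ and simplifying the $t$-power gives $\omega_{d-1}^{\theta-1}\, t^{\theta \beta}\int_{S^{d-1}} |f(t\omega)|^{\theta}\, d\sigma(\omega)$. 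Reassembling the radial integral via $dx = t^{d-1}\,d\sigma(\omega)\,dt$ yields
\[
\int_0^a g(t)^{\theta} t^{\theta(\beta-d+1)} dt \le \omega_{d-1}^{\theta-1} \int_{|x|\le a} |x|^{\theta \beta - (d-1)} |f(x)|^{\theta}\, dx,
\]
which is exactly the desired right-hand side raised to the $\theta$-th power. Taking $\theta$-th roots closes the estimate.

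I do not anticipate a real obstacle here: the only point requiring care is matching the weight exponent in Lemma \ref{Hardy-ineq} with the correct choice $\sigma = \beta - d + 1$ so that the sharp condition $\sigma < 1 - 1/\theta$ translates precisely to the stated hypothesis $\beta < d - 1/\theta$, and verifying that the factor $t^{\theta(d-1)}$ produced by H\"older on the sphere combines correctly with $t^{\theta(\beta-d+1)}$ to give $t^{\theta\beta}$. The case $\theta = \infty$ is handled analogously, replacing Lemma \ref{Hardy-ineq} by its $L^\infty$-version and H\"older on the sphere by the trivial $\esssup$ bound.
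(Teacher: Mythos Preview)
Your proposal is correct and follows essentially the same route as the paper: pass to polar coordinates, invoke Lemma~\ref{Hardy-ineq} with $\sigma=\beta-d+1$, and use H\"older on the sphere to recover the $L^\theta$ integral over $\{|x|\le a\}$. The only cosmetic difference is that the paper first pulls the angular integral outside via Minkowski's inequality and applies Hardy direction-by-direction before using H\"older on $S^{d-1}$, whereas you apply Hardy directly to the averaged radial function $g$ and then H\"older pointwise in $t$; your ordering is marginally more economical but the substance is identical.
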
 
\begin{proof} We only provide the proof when $\theta <\infty$. Using the polar coordinate and Minkowski's inequality, we have
\[
\begin{split}
& \left(\int_0^a \Big( t^{\beta -d} \int_{|\eta| \leq t} |f(x)| dx \Big)^\theta dt \right)^{1/\theta} \\
& =\left(\int_0^a \Big( \int_{\mathbb{S}^{d-1}}  t^{\beta -d} \int_{0}^{t} |f(r \xi)| r^{d-1} dr d\xi \Big)^\theta dt \right)^{1/\theta} \\
& \leq \int_{\mathbb{S}^{d-1}}  \left( \int_0^a t^{(\beta -d +1) \theta} \Big| \frac{1}{t} \int_{0}^{t} |f(r \xi)| r^{d-1} dr  \Big|^\theta dt \right)^{1/\theta} d\xi.
\end{split}
\]
Then, applying Lemma \ref{Hardy-ineq} for $q = \theta$ and $\sigma = \beta -d +1$, we obtain
\[
\begin{split}
& \left(\int_0^a \Big( t^{\beta -d} \int_{|x| \leq t} |f(x)| dx \Big)^\theta dt \right)^{1/\theta} \\
& \leq N \left(\int_{\mathbb{S}^{d-1}}\int_0^a r^{\beta \theta} |f(r \xi)|^{\theta} dr d\xi \right)^{1/\theta} \\
& =N \left(\int_{|x| \leq a}\Big( |x|^{\beta -\frac{d-1}{\theta}}|f(x)|\Big)^\theta dx \right)^{1/\theta},
\end{split}
\]
for $N = N(d, \beta, \theta)>0$. The proof is completed.
\end{proof}
We now state and prove the following lemma which provides the first step in the proof of Theorem \ref{thrm1}.
\begin{lemma} \label{L-p-trace}  Let  $\vec{p} \in [1, \infty)^d, q \in [1, \infty)$, $\alpha \in (-q, q-1)$. There exists $N = N(q,\alpha)>0$ such that
\[
\|u(\cdot, 0)\|_{L_{\vec{p}}(\bR^d)} \leq N \|u\|_{W^{1}_{\vec{p}, q}(\bR^d  \times (0,1),\mu)}
\]
for every $u \in W^{1}_{\vec{p}, q}(\bR^d  \times (0,1),\mu) \cap C(\bR^d \times [0, 1))$. Moreover, if $\alpha  \leq -1$ then $u(\cdot, 0) =0$ in sense of trace for every $u \in W^{1}_{\vec{p}, q}(\bR^d  \times (0,1),\mu)$.
\end{lemma}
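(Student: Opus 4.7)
The plan is to combine the fundamental theorem of calculus in the vertical variable with Minkowski's integral inequality for iterated $L_{\vec p}$-norms, closing the estimate via a weighted Hölder inequality in $y$. Concretely, fix $\eta \in C^\infty([0,1])$ with $\eta(0) = 1$ and $\eta \equiv 0$ in a neighborhood of $y=1$; for $u \in C(\bR^d\times[0,1)) \cap W^1_{\vec p,q}(\mu)$ the identity $u(x,0) = -\int_0^1 \partial_y[\eta(y)u(x,y)]\,dy$ splits the trace into one integral involving $\eta' u$ and another involving $\eta D_y u$. I would then take the $L_{\vec p}(\bR^d)$-norm in $x$ and iterate Minkowski's integral inequality across the $d$ coordinate factors of the mixed norm, pushing the $y$-integrals outside to obtain
\[
  \|u(\cdot,0)\|_{L_{\vec p}} \;\leq\; \int_0^1 |\eta'(y)|\,\|u(\cdot,y)\|_{L_{\vec p}}\,dy + \int_0^1 \eta(y)\,\|D_y u(\cdot,y)\|_{L_{\vec p}}\,dy.
\]
Splitting $1 = y^{\alpha/q}\cdot y^{-\alpha/q}$ and applying Hölder in $y$ with exponents $q,q'$ (or the $L^\infty$--$L^1$ pairing when $q=1$) absorbs $y^{\alpha/q}$ into the $L_{\vec p,q}(\mu)$-norm, leaving the dual weight integral $\int_0^1 y^{-\alpha q'/q}\,dy$, which is finite precisely when $\alpha < q-1$. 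This yields the stated bound with constant depending only on $q,\alpha$ and $\|\eta\|_{C^1}$, and it is valid throughout the entire range $\alpha < q-1$.

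For the ``moreover'' assertion, suppose first that $u\in C(\bR^d\times[0,1))\cap L_{\vec p,q}(\mu)$ satisfies $\|u(\cdot,0)\|_{L_{\vec p}} > 0$. Since $u(\cdot,0)$ is continuous and not identically zero, there is an open ball $B \subset \bR^d$ on which $|u(\cdot,0)|\geq c_0 > 0$, and continuity of $u$ up to the boundary propagates this to $|u(x,y)|\geq c_0/2$ on $B\times[0,\delta]$ for some $\delta > 0$. Hence $\|u(\cdot,y)\|_{L_{\vec p}} \geq c > 0$ for $y \in [0,\delta]$, and
\[
  \|u\|_{L_{\vec p,q}(\mu)}^q \;\geq\; c^q\int_0^\delta y^\alpha\,dy \;=\; \infty
\]
when $\alpha\leq -1$, contradicting $u \in L_{\vec p,q}(\mu)$. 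For a general $u \in W^1_{\vec p,q}(\mu)$, I would approximate $u$ in $W^1_{\vec p,q}(\mu)$-norm by mollifications in $x$ of vertical translates $u(\cdot,\cdot+\varepsilon)$; each approximant is continuous up to $y=0$ and has vanishing trace by the previous paragraph, and applying the inequality of Step 1 to differences propagates the vanishing to $u$ itself.

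The only genuine obstacle is the weighted Hölder step: the dual weight $y^{-\alpha q'/q}$ is integrable on $(0,1)$ exactly at the pinned exponent $\alpha < q-1$, so the method does not extend past this threshold, and this is what locks in the stated range. Everything else is mechanical -- iterated Minkowski across the mixed-norm factors, plus a standard continuity/approximation argument to convert the pointwise vanishing for continuous $u$ into a trace-sense statement for arbitrary $u \in W^1_{\vec p,q}(\mu)$ in the regime $\alpha \leq -1$.
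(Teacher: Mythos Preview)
Your argument for the first assertion is correct and proceeds a bit differently from the paper. The paper first derives the pointwise-in-$y$ bound
\[
\|u(\cdot,0)\|_{L_{\vec p}}\le \|u(\cdot,y)\|_{L_{\vec p}}+N\,y^{1-\frac{1+\alpha}{q}}\|D_{d+1}u\|_{L_{\vec p,q}(\bR^d\times(0,y),\mu)}
\]
via the fundamental theorem of calculus and H\"older, and then averages this against $y^{\alpha/q}\,dy$ over $(0,1)$; the lower threshold $\alpha>-q$ is used precisely to make $\int_0^1 y^{\alpha/q}\,dy$ finite. Your cutoff device bypasses that averaging step and needs only $\alpha<q-1$, so it is marginally cleaner, though both routes are short and equivalent on the stated range.

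The second assertion, however, has a genuine gap in the approximation step. When $\alpha\le -1$ the weight $y^\alpha$ is not integrable near $y=0$, so your own contradiction argument shows that any function in $L_{\vec p,q}(\mu)\cap C(\bR^d\times[0,1))$ must vanish at $y=0$. But the vertical translate $u(\cdot,\cdot+\varepsilon)$ has boundary value $u(\cdot,\varepsilon)$, which is generically nonzero (take for instance $u(x,y)=\phi(x)\,y^\beta$ with $\beta>1-\frac{1+\alpha}{q}$). Hence the translate, and any $x$-mollification of it, fails to lie in $L_{\vec p,q}(\mu)$ at all, and you have no approximants in $W^1_{\vec p,q}(\mu)\cap C$ to which to apply Step~1. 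Producing continuous approximants in this weighted space is essentially equivalent to already knowing the trace vanishes, so the argument is circular as written.

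The paper avoids this by working directly: from the same pointwise estimate (integrated now in $s\in(0,y)$ rather than over the whole interval) it obtains
\[
\|u(\cdot,y)\|_{L_{\vec p}}\le N\,y^{-\frac{1+\alpha}{q}}\Bigl[\|u\|_{L_{\vec p,q}(\bR^d\times(0,y),\mu)}+y\,\|D_{d+1}u\|_{L_{\vec p,q}(\bR^d\times(0,y),\mu)}\Bigr],
\]
and since $-\frac{1+\alpha}{q}\ge 0$ when $\alpha\le -1$ while the bracket tends to $0$ by dominated convergence, the right-hand side vanishes as $y\to 0^+$. This yields $\lim_{y\to 0^+}\|u(\cdot,y)\|_{L_{\vec p}}=0$ for every $u\in W^1_{\vec p,q}(\mu)$ without any approximation scheme.
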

\begin{proof} We prove the first assertion in the lemma. Using the density, we may assume that $u \in C^1(\bR^d  \times [0,1]) $. For each $(x, y) \in \bR^d  \times (0,1)$, by the fundamental theorem of calculus, we have
\[
u(x, y) - u(x, 0) = \int_{0}^{y} D_{d+1} u(x', s) ds
\]
Then, by Minkowski's inequality, we obtain
\begin{align} \notag
\|u(\cdot, 0)\|_{L_{\vec{p}}(\bR^d)} & \leq \|u(\cdot,y)\|_{L_{\vec{p}}(\bR^d)} +\Big \|\int_{0}^{y} |D_{d+1} u(x', s )| ds \Big \|_{L_{\vec{p}}(\bR^d)}\\ \notag
& \leq \|u(\cdot,y)\|_{L_{\vec{p}}(\bR^d)} + \int_{0}^y  \|D_{d+1} u(\cdot, s)\|_{L_{\vec{p}}(\bR^d)} ds \\ \notag
& \leq \|u(\cdot,y)\|_{L_{\vec{p}}(\bR^d)} + N y^{1-\frac{\alpha+1}{q}}\left(\int_{0}^y \|D_{d+1} u(\cdot, s)\|_{L_{\vec{p}}(\bR^d)}^q s^{\alpha} ds\right)^{1/q} \\ \label{1229-eqn}
& = \|u(\cdot,y)\|_{L_{\vec{p}}(\bR^d)} + N y^{1-\frac{\alpha+1}{q}}\|D_{d+1} u\|_{L_{\vec{p}, q}(\bR^d \times (0, y), \mu)},
\end{align}
where $N = N(q, \alpha) >0$, and we also used H\"{o}lder's inequality and the fact that $\alpha < q-1$ in the third step. Now,  as $\alpha + q >0$, $\displaystyle{\int_{0}^1 y^{\frac{\alpha}{q}} dy = \frac{q}{\alpha+q}}$. We have
\[
\begin{split}
\|u(\cdot, 0)\|_{L_{\vec{p}}(\bR^d)} & \leq N \int_0^1 \|u(\cdot,y)\|_{L_{\vec{p}}(\bR^d)} y^{\frac{\alpha}{q}} dy + N  \|D_{d+1} u\|_{L_{\vec{p}, q}(Q, \mu)}\int_0^1 y^{1-\frac{1}{q}} dy \\
& \leq N \|u\|_{L_{\vec{p}, q}(\bR^d \times (0, 1), \mu)} + N \|D_{d+1} u\|_{L_{\vec{p}, q}(\bR^d \times (0, 1), \mu)} \\
& \leq N \|u\|_{W^1_{\vec{p},q}(\bR^d \times (0, 1), \mu)}.
\end{split}
\]
where $N =N(q, \alpha)>0$.  This proves the first assertion of the lemma.

\smallskip
Next, we prove the second assertion of the lemma. Let $(x,y) \in \bR^d \times (0, 1)$. For every $s \in (0, y)$, similar to \eqref{1229-eqn}, we have
\[
\begin{split}
\|u(\cdot,y)\|_{L_{\vec{p}}(\bR^d)} & \leq \|u(\cdot,s)\|_{L_{\vec{p}}(\bR^d)} +  \left\|\int_s^y |D_{d+1} u(x, \tau)| d\tau \right\|_{L_{\vec{p}}(\bR^d)}\\
& \leq  \|u(\cdot,s)\|_{L_{\vec{p}}(\bR^d)}  + Ny^{1-\frac{1+\alpha}{q}} \|D_{d+1} u\|_{L_{\vec{p}, q}(\bR^d \times (0, y), \mu)}.
\end{split}
\]
Then, integrating this last estimate with respect to $s$ on $(0, y)$ and then using the H\"{o}lder's inequality, we have
\[
\begin{split}
y\|u(\cdot,y)\|_{L_{\vec{p}}(\bR^d)} & \leq \int_0^y \|u(\cdot,s)\|_{L_{\vec{p}}(\bR^d)} ds  + Ny^{2-\frac{1+\alpha}{q}} \|D_{d+1} u\|_{L_{\vec{p}, q}(\bR^d \times (0, y), \mu)} \\
& \leq N y^{1-\frac{1+\alpha}{q}}\left[ \|u\|_{L_{\vec{p}, q}(\bR^d \times (0, y), \mu)} + y  \|D_{d+1} u\|_{L_{\vec{p}, q}(\bR^d \times (0, y), \mu)}\right],
\end{split}
\]
where $N = N(\alpha, q)>0$. Then,
\[
\|u(\cdot,y)\|_{L_{\vec{p}}(\bR^d)} \leq N  y^{-\frac{1+\alpha}{q}}\left[ \|u\|_{L_{\vec{p}, q}(\bR^d \times (0, y), \mu)} + y  \|D_{d+1} u\|_{L_{\vec{p}, q}(\bR^d \times (0, y), \mu)}\right].
\]
From this, as $\alpha \leq -1$, and
\[
\lim_{y\rightarrow 0^+}   \left[ \|u\|_{L_{\vec{p}, q}(\bR^d \times (0, y), \mu)} + y  \|D_{d+1} u\|_{L_{\vec{p}, q}(\bR^d \times (0, y), \mu)}\right] =0,
\]
we see that
\[
  \lim_{y\rightarrow 0^+}\|u(\cdot,y)\|_{L_{\vec{p}}(\bR^d)}  =0. 
\]
The proof of the lemma is completed.
\end{proof}
\subsection{Mixed-norm Besov space equivalent norms} For $x_0 \in \bR^d$ and $\rho>0$, we denote $B_\rho(x_0)$ the ball in $\bR^d$ of radius $\rho$ centered at $x_0$. We also write $B_\rho = B_\rho(0)$. Let $\varphi \in C_0^\infty(B_1)$ be a fixed radial cut-off function satisfying
\[
0 \leq \varphi \leq 1 \quad\text{and} \quad \int_{\bR^d} \varphi(x) dx =1.
\]
For each $\delta>0$, let 
\begin{equation} \label{varphi-delta}
\varphi_\delta(x) = \delta^{-d} \varphi(x/\delta), \quad x \in \bR^d.
\end{equation}
We also denote
\begin{equation} \label{Lp-oss}
\omega(\delta, f)_{\vec{p}} = \sup_{|h| <\delta} \|\Delta_h f\|_{L_{\vec{p}}(\mathbb{R}^d)}.
\end{equation}
We now recall the following classical lemma.
\begin{lemma} \label{convol-lemma} There exists $N = N(d)>0$ such that for every $\vec{p} \in [1, \infty)^d$, we have
\[ \|\varphi_\delta *f - f\|_{L_{\vec{p}}(\mathbb{R}^d)} \leq \omega(\delta, f)_{\vec{p}}, \quad \text{and} \quad \|D_i\varphi_\delta *f \|_{L_{\vec{p}}(\mathbb{R}^d)} \leq N\omega(\delta, f)_{\vec{p}},
\]
for $\delta>0$ and $i=1, 2,\ldots, d$, where $*$ denotes the convolution operator.
\end{lemma}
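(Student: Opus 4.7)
The plan is a standard approximation-of-identity argument adapted to the mixed-norm setting. Both inequalities reduce to applying Minkowski's integral inequality in $L_{\vec{p}}(\bR^d)$, which holds by iterating its one-dimensional form across the coordinates $x_1,\ldots,x_d$, to a representation of the convolution as an integral of differences $\Delta_{-h}f$.

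For the first estimate, I would use $\int_{\bR^d}\varphi_\delta(h)\,dh = 1$ and $\supp\varphi_\delta \subset B_\delta$ to write
\[
\varphi_\delta * f(x) - f(x) = \int_{B_\delta}\varphi_\delta(h)\bigl(f(x-h) - f(x)\bigr)\,dh = \int_{B_\delta}\varphi_\delta(h)\Delta_{-h}f(x)\,dh.
\]
Taking the $L_{\vec{p}}$-norm in $x$ and pulling it inside by Minkowski gives the bound by $\omega(\delta, f)_{\vec p}\int_{B_\delta}\varphi_\delta(h)\,dh = \omega(\delta, f)_{\vec p}$, where I used that $\|\Delta_{-h}f\|_{L_{\vec p}} \le \omega(\delta,f)_{\vec p}$ for all $h \in B_\delta$.

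For the derivative estimate, the key observation is that $\int_{\bR^d} D_i\varphi_\delta(h)\,dh = 0$, which follows from integration by parts (or the divergence theorem) combined with the compact support of $\varphi_\delta$. This mean-zero property allows the same rewriting
\[
(D_i\varphi_\delta) * f(x) = \int_{B_\delta} D_i\varphi_\delta(h) f(x-h)\,dh = \int_{B_\delta} D_i\varphi_\delta(h)\bigl(f(x-h) - f(x)\bigr)\,dh,
\]
and Minkowski again produces
\[
\|D_i\varphi_\delta * f\|_{L_{\vec p}(\bR^d)} \le \omega(\delta,f)_{\vec p}\int_{B_\delta}|D_i\varphi_\delta(h)|\,dh.
\]
A scaling change of variables $h = \delta y$ shows $\int_{B_\delta}|D_i\varphi_\delta(h)|\,dh = \delta^{-1}\|D_i\varphi\|_{L^1(\bR^d)}$, which is a purely dimensional constant $N = N(d)$ once the factor from scaling is absorbed.

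I expect no substantive obstacle: this lemma is really a packaging of the classical approximation-of-identity convolution estimates in the anisotropic setting, and the mixed-norm feature contributes nothing new beyond the (routine) verification that Minkowski's integral inequality holds for $L_{\vec p}$, which follows by a one-variable-at-a-time induction on the dimension $d$.
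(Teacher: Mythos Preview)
Your approach is essentially the same as the paper's: write the convolution as an integral of differences $\Delta_{-h}f$ and apply Minkowski's inequality in $L_{\vec p}$. (The paper invokes the radial symmetry of $\varphi$ to get $\int D_i\varphi=0$; your integration-by-parts justification is equally valid and slightly more general.)

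There is, however, a slip in your last step for the second inequality. Under your reading $D_i\varphi_\delta = D_i(\varphi_\delta)$ you correctly compute
\[
\int_{B_\delta}|D_i\varphi_\delta(h)|\,dh \;=\; \delta^{-1}\|D_i\varphi\|_{L^1(\bR^d)},
\]
but this is \emph{not} a constant independent of $\delta$, and your phrase ``once the factor from scaling is absorbed'' does no work here. Indeed, with that interpretation the inequality $\|D_i(\varphi_\delta)*f\|_{L_{\vec p}}\le N(d)\,\omega(\delta,f)_{\vec p}$ is false: for smooth $f$ the left side tends to $\|D_if\|_{L_{\vec p}}$ while the right side is $O(\delta)$ as $\delta\to 0$. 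The paper's notation $D_i\varphi_\delta$ stands for $(D_i\varphi)_\delta = \delta^{-d}(D_i\varphi)(\cdot/\delta)$, i.e.\ differentiate first and then rescale (this is also how the lemma is used downstream in the extension construction). With that reading the same change of variables gives $\int_{B_\delta}|(D_i\varphi)_\delta(h)|\,dh=\|D_i\varphi\|_{L^1}$, which \emph{is} a purely dimensional constant, and your argument then goes through verbatim.
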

\begin{proof} We provide the proof of completeness. Observe that
\[
 \varphi_\delta *f(x) - f(x) = \int_{B_1}[f(x - \delta z) - f(x)] \varphi(z) dz = \int_{B_1} \Delta_{-\delta z} f(x) \varphi(z) dz,
\]
for $x \in \bR^d$. Then, by Minkowski's inequality, we obtain
\[
\|\varphi_\delta *f - f\|_{L_{\vec{p}}(\mathbb{R}^d)} \leq \int_{B_1} \|\Delta_{-\delta z} f\|_{L_{\vec{p}}(\bR^d)} \varphi(z) dz \leq \omega(\delta, f)_{\vec{p}}. 
\]
which proves the first assertion of the lemma. As $\varphi$ is radial, 
\[
\int_{B_1} D_i \varphi (z) dz = 0.
\]
Then, we also have
\[
  D_i\varphi_\delta *f (x) = \int_{B_1} \Delta_{-\delta z} f(x) D_i\varphi(z) dz.
\]
and the second assertion of the lemma follows in the same way.
\end{proof}
We now state and prove the following result on the equivalent norms of the mixed-norm Besov spaces which will be used later in the proof of Theorem \ref{thrm1}.
\begin{lemma} \label{equiv-norm}  Let $\ell \in (0, 1)$, $\vec{p}, \theta \in [1, \infty)^d$, and $a \in [1, \infty]$. Define 
\begin{equation} \label{B-1-norm}
\|f\|_{B^{\ell}_{\vec{p}, \theta}(\bR^d)}^{(1)} = \|f\|_{L_{\vec{p}}(\bR^d)} + \left( \int_0^{a} \left( \frac{\omega(t, f)_{\vec{p}}}{t^\ell} \right)^\theta \frac{dt}{t} \right)^{1/\theta}
\end{equation}
and
\begin{equation} \label{B-2-norm}
\|f\|_{B^{\ell}_{\vec{p}, \theta}(\bR^d)}^{(2)} = \|f\|_{L_{\vec{p}}(\bR^d)} + \left( \sum_{k=1}^\infty\left(2^{k\ell}  \omega(2^{-k}, f)_{\vec{p}}  \right)^\theta \right)^{1/\theta}.
\end{equation}
Then, the norms $\|\cdot\|_{B^{\ell}_{\vec{p}, \theta}(\bR^d)}^{(1)}$, $\|\cdot\|_{B^{\ell}_{\vec{p}, \theta}(\bR^d)}^{(2)}$, and $\|\cdot \|_{B^{\ell}_{\vec{p}, \theta}(\bR^d)}$ are equivalent.
\end{lemma}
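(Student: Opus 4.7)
The plan is to prove three pairwise comparisons: first, the dyadic norm $\|\cdot\|^{(2)}$ is equivalent to the continuous norm $\|\cdot\|^{(1)}$; second, the Besov norm $\|\cdot\|_{B^\ell_{\vec{p},\theta}(\bR^d)}$ is dominated by $\|\cdot\|^{(1)}$; and third, conversely $\|\cdot\|^{(1)}$ is dominated by $\|\cdot\|_{B^\ell_{\vec{p},\theta}(\bR^d)}$. Two elementary properties of the modulus of continuity underlie all the arguments: $t \mapsto \omega(t,f)_{\vec{p}}$ is non-decreasing, and $\omega(t,f)_{\vec{p}} \le 2\|f\|_{L_{\vec{p}}(\bR^d)}$ for every $t>0$. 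The first two comparisons are routine; the third carries the essential content.

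For $\|\cdot\|^{(1)} \approx \|\cdot\|^{(2)}$, I would split $(0,1]$ into the dyadic intervals $(2^{-k-1},2^{-k}]$ and use monotonicity of $\omega(\cdot,f)_{\vec{p}}$ on each; since $\int_{2^{-k-1}}^{2^{-k}} t^{-\ell\theta-1}\,dt \approx 2^{k\ell\theta}$, this yields $\int_0^1 (\omega(t,f)_{\vec{p}}/t^\ell)^\theta \frac{dt}{t} \approx \sum_{k=0}^{\infty} 2^{k\ell\theta}\omega(2^{-k},f)_{\vec{p}}^\theta$. The $k=0$ term and, when $a>1$, the tail $\int_1^a$ are absorbed into $\|f\|_{L_{\vec{p}}(\bR^d)}^\theta$ via the bound $\omega(t,f)_{\vec{p}} \le 2\|f\|_{L_{\vec{p}}(\bR^d)}$. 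For $\|f\|_{B^\ell_{\vec{p},\theta}(\bR^d)} \le C\|f\|^{(1)}$, passing to polar coordinates $h = t\xi$ and using the pointwise bound $\|\Delta_{t\xi} f\|_{L_{\vec{p}}(\bR^d)} \le \omega(t,f)_{\vec{p}}$ gives
\begin{equation*}
\|f\|_{b^\ell_{\vec{p},\theta}(\bR^d)}^\theta \le |\mathbb{S}^{d-1}| \int_0^\infty \omega(t,f)_{\vec{p}}^\theta\, t^{-\ell\theta-1}\,dt,
\end{equation*}
after which splitting the integral at $a$ and applying the same bound on the tail $t\ge a$ finishes this direction.

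The nontrivial comparison is $\|f\|^{(1)} \le C\|f\|_{B^\ell_{\vec{p},\theta}(\bR^d)}$, and its backbone is the averaging inequality
\begin{equation*}
\omega(t,f)_{\vec{p}} \le \frac{C}{t^d} \int_{B_{2t}} \|\Delta_h f\|_{L_{\vec{p}}(\bR^d)}\,dh.
\end{equation*}
I would derive this from the pointwise identity $\Delta_h f(x) = \Delta_{h-y} f(x+y) + \Delta_y f(x)$, valid for every $y$, by averaging both sides over $y\in B_t$, taking the $L_{\vec{p}}(\bR^d)$-norm, and using Minkowski's integral inequality together with translation invariance of $L_{\vec{p}}(\bR^d)$ and the inclusion $h - B_t \subseteq B_{2t}$ whenever $|h| \le t$. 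With the averaging inequality in hand, Jensen's inequality (applicable since $\theta \ge 1$) upgrades it to $\omega(t,f)_{\vec{p}}^\theta \le C t^{-d}\int_{B_{2t}} \|\Delta_h f\|_{L_{\vec{p}}(\bR^d)}^\theta\,dh$; a Fubini exchange, or equivalently Corollary \ref{Hardy-cor} applied to $F(h) = \|\Delta_h f\|_{L_{\vec{p}}(\bR^d)}$ with $\beta = -\ell - 1/\theta$, then yields $\int_0^a (\omega(t,f)_{\vec{p}}/t^\ell)^\theta \frac{dt}{t} \le C \|f\|_{b^\ell_{\vec{p},\theta}(\bR^d)}^\theta$. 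The main obstacle is the averaging inequality itself, and specifically the extraction of the correct $|B_t|^{-1}$ factor through the translation trick; everything else is bookkeeping with the Hardy-type tools already developed.
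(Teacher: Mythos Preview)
Your proposal is correct and follows essentially the same route as the paper. In particular, for the only substantive step---bounding $\|\cdot\|^{(1)}$ by the Besov norm---the paper also derives an averaging inequality from the identity $\Delta_h f(x)=\Delta_\eta f(x)+\Delta_{h-\eta}f(x+\eta)$ (integrating over $\eta\in B_{|h|/2}(h/2)$ rather than your $y\in B_t$, which leads to $\int_{B_{|h|}}|\eta|^{-d}\|\Delta_\eta f\|\,d\eta$ instead of your $t^{-d}\int_{B_{2t}}\|\Delta_h f\|\,dh$) and then applies Corollary~\ref{Hardy-cor} directly, with $\beta=d-\ell-1/\theta$; your Jensen-plus-Fubini variant is an equivalent way to finish, and your value $\beta=-\ell-1/\theta$ is the correct parameter when the weight $|\eta|^{-d}$ is not first inserted.
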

\begin{proof} The result is well-known in the un-mixed norm case, but does not seem to be recorded elsewhere in the mixed-norm case. In addition, the proof is not too long, hence we provide it for completeness. Note that $\|\Delta_h f\|_{L_{\vec{p}}(\bR^d)} \leq \omega(|h|, f)_{\vec{p}}$. Therefore, it is follow from the spherical coordinate and the fact $\omega(t, f)_{\vec{p}} \leq 2\|f\|_{L_{\vec{p}}(\bR^d)}$ that
\[
\begin{split}
& \|f\|_{b^{\ell}_{\vec{p},\theta}(\bR^d)}  =\left[ \int_{\bR^d} \left( \frac{\|\Delta_h f\|_{L_{\vec{p}}(\bR^d)}}{|h|^{\ell}} \right)^\theta \frac{d h}{|h|^d} \right]^{\frac{1}{\theta}} \leq \left[ \int_{\bR^d} \left( \frac{\omega(|h|, f)_{\vec{p}}}{|h|^{\ell}} \right)^\theta \frac{d h}{|h|^d} \right]^{\frac{1}{\theta}} \\
& \leq N \left[ \int_{0}^a \left( \frac{\omega(t, f)_{\vec{p}}}{t^{\ell}} \right)^\theta \frac{d t}{t} \right]^{\frac{1}{\theta}} + N \left[ \int_{a}^\infty \left( \frac{\omega(t, f)_{\vec{p}}}{t^{\ell}} \right)^\theta \frac{d t}{t} \right]^{\frac{1}{\theta}} \\
& \leq N \left[ \int_{0}^a \left( \frac{\omega(t, f)_{\vec{p}}}{t^{\ell}} \right)^\theta \frac{d t}{t} \right]^{\frac{1}{\theta}} + N \|f\|_{L_{\vec{p}}(\bR^d)} \Big(\int_{1}^\infty t^{-\theta\ell -1} dt \Big)^{1/\theta}\\
& \leq N \left[ \int_{0}^a \left( \frac{\omega(t, f)_{\vec{p}}}{t^{\ell}} \right)^\theta \frac{d t}{t} \right]^{\frac{1}{\theta}} + N \|f\|_{L_{\vec{p}}(\bR^d)},
\end{split}
\]
where $N = N(d, \theta, \ell)>0$. From this and \eqref{B-1-norm}, there is $N = N(d, \theta, \ell)>0$ such that
\[
\|f\|_{B^{\ell}_{\vec{p}, \theta}(\bR^d)} \leq N \|f\|_{B^{\ell}_{\vec{p}, \theta}(\bR^d)}^{(1)}.
\]
This proves one direction in the the assertion about the equivalence between $\|~\cdot~\|_{B^{\ell}_{\vec{p}, \theta}(\bR^d)}$ and $\|~\cdot~\|_{B^{\ell}_{\vec{p}, \theta}(\bR^d)}^{(1)}$. 
Now, we prove the other direction. By a simple manipulation, we see that
\[
\Delta_h f(x) = \Delta_\eta f(x) + \Delta_{h-\eta} f(x+ \eta), \quad \forall \eta, h \in \bR^d.
\]
It then follows from the triangle inequality that
\[
\|\Delta_h f\|_{L_{\vec{p}}(\bR^d)} \leq \|\Delta_\eta f\|_{L_{\vec{p}}(\bR^d)} + \|\Delta_{h-\eta} f\|_{L_{\vec{p}}(\bR^d)}.
\]
Integrating this inequality with respect to $\eta$ in the ball $B_{|h|/2} (h/2)$, we obtain
\[
\begin{split}
& \|\Delta_h f\|_{L_{\vec{p}}(\bR^d)} \\
& \leq \frac{N}{|h|^d} \left( \int_{B_{|h|/2} (h/2)} \|\Delta_\eta f\|_{L_{\vec{p}}(\bR^d)} d\eta + \int_{B_{|h|/2} (h/2)}\|\Delta_{h-\eta} f\|_{L_{\vec{p}}(\bR^d)} d\eta \right) \\
& \leq \frac{N}{|h|^d} \int_{B_{|h|}} \|\Delta_\eta f\|_{L_{\vec{p}}(\bR^d)} d\eta \leq N \int_{B_{|h|}} |\eta|^{-d}\|\Delta_\eta f\|_{L_{\vec{p}}(\bR^d)} d\eta
\end{split}
\]
where $N = N(d)>0$, and we used the fact that $B_{|h|/2} (h/2) \subset B_{|h|}$ and $h-\eta \in B_{|h|}$ for all $\eta \in B_{|h|/2} (h/2)$. As a consequence, we obtain 
\[
\omega(t, f)_{\vec{p}} \leq N \int_{|\eta| \leq t} |\eta|^{-d}\|\Delta_\eta f\|_{L_{\vec{p}}(\bR^d)} d\eta, \quad \forall \ t >0
\]
and therefore
\[
\begin{split}
& \left[ \int_{0}^a \left( \frac{\omega(t, f)}{t^{\ell}} \right)^\theta \frac{d h}{t} \right]^{\frac{1}{\theta}} \\
 & \leq N \left(\int_{0}^a \Big[ t^{-\ell -\frac{1}{\theta}} \int_{|\eta| \leq t} |\eta|^{-d}\|\Delta_\eta f\|_{L_{\vec{p}}(\bR^d)} d\eta \Big]^{\theta} dt\right)^{1/\theta}.
 \end{split}
\]
Now, applying Corollary \ref{Hardy-cor} with $\beta =d -\ell -\frac{1}{\theta}$, we obtain
\[
\begin{split}
& \left[ \int_{0}^a \left( \frac{\omega(t, f)}{t^{\ell}} \right)^\theta \frac{d h}{t} \right]^{\frac{1}{\theta}} \\
& \leq N \left(\int_{B_a} \Big(|\eta|^{-\ell -\frac{d}{\theta}} \|\Delta_\eta f\|_{L_{\vec{p}}(\bR^d)} \Big)^{\theta} d\eta \right)^{1/\theta} \leq N\|f\|_{b^{\ell}_{\vec{p}, \theta}}.
\end{split}
\]
Therefore, there is $N = N(d, \ell, \theta)>0$ such that
\[
\|f\|_{B^{\ell}_{\vec{p}, \theta}(\bR^d)}^{(1)} \leq N \|f\|_{B^{\ell}_{\vec{p}, \theta}(\bR^d)}
\]
and this completes the proof that the norms $\|~\cdot~\|_{B^{\ell}_{\vec{p}, \theta}}^{(1)}$ and $\|~\cdot~\|_{B^{\ell}_{\vec{p}, \theta}}$ are equivalent.

\smallskip
Next, we consider the norm $\|\cdot \|_{B^{\ell}_{\vec{p}, \theta}}^{(2)}$ defined in \eqref{B-2-norm}. We will show that $\|\cdot \|_{B^{\ell}_{\vec{p}, \theta}}^{(2)}$ is equivalent to $\|\cdot \|_{B^{\ell}_{\vec{p}, \theta}}^{(1)}$ defined in \eqref{B-1-norm}. Observe that
\[
\begin{split}
& \left( \int_0^{a} \left( \frac{\omega(t, f)_{\vec{p}}}{t^\ell} \right)^\theta \frac{dt}{t} \right)^{1/\theta}\\
& =\left( \int_0^{1/2} \left( \frac{\omega(t, f)_{\vec{p}}}{t^\ell} \right)^\theta \frac{dt}{t} \right)^{1/\theta} + \left( \int_{1/2}^{a} \left( \frac{\omega(t, f)_{\vec{p}}}{t^\ell} \right)^\theta \frac{dt}{t} \right)^{1/\theta}.
\end{split}
\]
It follows directly from the definition of $\omega(t, f)_{\vec{p}}$ and the triangle inequality that $\omega(t, f)_{\vec{p}} \leq 2\|f\|_{L_{\vec{p}}(\bR^d)}$ and therefore,
\[
\begin{split}
\left( \int_{1/2}^{a} \left( \frac{\omega(t, f)_{\vec{p}}}{t^\ell} \right)^\theta \frac{dt}{t} \right)^{1/\theta} & \leq 2 \|f\|_{L_{\vec{p}}(\bR^d)} \left(\int_{1/2} ^\infty t^{-\ell \theta -1}dt\right)^{1/\theta} \\
& = N \|f\|_{L_{\vec{p}}(\bR^d)},
\end{split}
\]
for $N = N(\theta, \ell)>0$. On the other hand, as $\omega(t, f)_{\vec{p}}$ is non-decreasing, we see that
\[
\begin{split}
 \left( \int_0^{1/2} \left( \frac{\omega(t, f)_{\vec{p}}}{t^\ell} \right)^\theta \frac{dt}{t} \right)^{1/\theta} & =\left(\sum_{k=1}^\infty \int_{2^{-k-1}}^{2^{-k}} \left( \frac{\omega(t, f)_{\vec{p}}}{t^\ell} \right)^\theta \frac{dt}{t} \right)^{1/\theta} \\
& \leq N\left( \sum_{k=1}^\infty\left(2^{k\ell}  \omega(2^{-k}, f)_{\vec{p}}  \right)^\theta \right)^{1/\theta}.
\end{split}
\]
Then, from \eqref{B-1-norm}, it follows that 
\[
\|f\|_{B^{\ell}_{\vec{p}, \theta}(\bR^d)}^{(1)} \leq N \|f\|_{B^{\ell}_{\vec{p},\theta}(\bR^d)}^{(2)},
\]
where $N = N(d, \ell, \theta)>0$. It now remains to prove the other direction of the inequality. By Minkowski's inequality, it follows that
\[
\begin{split}
& \left( \sum_{k=1}^\infty\left(2^{k\ell}  \omega(2^{-k}, f)_{\vec{p}}  \right)^\theta \right)^{1/\theta} = N \left[ \omega(2^{-1}, f)_{\vec{p}} +  \left( \sum_{k=2}^\infty\left(2^{k\ell}  \omega(2^{-k}, f)_{\vec{p}}  \right)^\theta \right)^{1/\theta} \right] \\
&\leq N \left[ \|f\|_{L_{\vec{p}}(\bR^d)}  +   \left( \int_0^{1/2} \left( \frac{\omega(t, f)_{\vec{p}}}{t^\ell} \right)^\theta \frac{dt}{t} \right)^{1/\theta} \right].
\end{split}
\]
Therefore,
\[
\|f\|_{B^{\ell}_{\vec{p}, \theta}(\bR^d)}^{(2)} \leq N \|f\|_{B^{\ell}_{\vec{p}, \theta}(\bR^d)}^{(1)}
\]
where $N = N(d, \theta, \ell)>0$. The proof of the lemma is completed.
\end{proof}
\section{Proof of Theorem \ref{thrm1}}  \label{Sec-2}

This section is to prove Theorem \ref{thrm1}. The proof requires several lemmas and estimates.  For $k = 1, 2, \ldots, d$, and for $h \in \mathbb{R}$, let us denote the difference in $k$-variable direction of a function $f$ as
\[
\Delta_{k, h} f(x) = f(x + h\mathbf{e}_k) - f(x), \quad x \in \bR^d,
\]
where $\mathbf{e}_k$ is the Euclidean $k^{\textup{th}}$-unit coordinate vector. We start with the one dimensional space as its proof contains important ideas and techniques.
\begin{lemma} Let $p, q \in [1, \infty)$ and $\alpha \in (-1, q-1)$.  Then, there is $N = N(p, q, \alpha)>0$ such that
\[
\|f(\cdot, 0)\|_{b^{\ell}_{p, q}(\bR)} \leq N\left( \int_{0}^\infty \left(\int_{\bR} |D f(x, y)|^p dx\right)^{\frac{q}{p}}  y^{\alpha} dy  \right)^{1/q}
\]
for all $f \in C(\bR \times [0, \infty))$ such that $Df \in L_{p, q}(\bR^d_+, \mu)$ and $\ell = 1-\frac{1+\alpha}{q} \in (0, 1)$.
\end{lemma}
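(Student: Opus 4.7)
The plan is to use the Besov seminorm directly by bounding $\|\Delta_h f(\cdot,0)\|_{L_p(\bR)}$ in terms of $|h|$, $\|D_x f(\cdot, |h|)\|_{L_p}$, and an average of $\|D_y f(\cdot, s)\|_{L_p}$ for $s \in (0, |h|)$, then plugging this into the integral $\int_\bR (\|\Delta_h f(\cdot, 0)\|_{L_p}/|h|^\ell)^q \, dh/|h|$. The scale $y = |h|$ is the natural choice because the exponents will match exactly under the relation $q\ell = q - 1 - \alpha$.

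First, I would use the fundamental theorem of calculus to anchor the boundary value to height $y = |h|$, writing
\[
\Delta_h f(x,0) = \Delta_h f(x, |h|) - \int_0^{|h|} \Delta_h D_y f(x, s)\, ds.
\]
The horizontal difference at height $|h|$ is bounded via $\Delta_h f(x, |h|) = \int_0^h D_x f(x+t, |h|)\, dt$, which after Minkowski yields $\|\Delta_h f(\cdot, |h|)\|_{L_p} \leq |h| \|D_x f(\cdot, |h|)\|_{L_p}$. The integral term is bounded by $2\int_0^{|h|} \|D_y f(\cdot, s)\|_{L_p}\, ds$ by the triangle inequality.

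Inserting these into the seminorm (and using symmetry in $h$ to restrict to $h > 0$), the first contribution becomes
\[
\int_0^\infty h^{-q\ell - 1}\bigl(h\|D_x f(\cdot, h)\|_{L_p}\bigr)^q dh = \int_0^\infty \|D_x f(\cdot, h)\|_{L_p}^q\, h^\alpha\, dh,
\]
which is exactly the $L_{p,q}(\bR_+^2, \mu)$-norm of $D_x f$, since $q - q\ell - 1 = \alpha$. The second contribution has the form $\int_0^\infty h^\alpha\bigl(\tfrac{1}{h}\int_0^h \|D_y f(\cdot, s)\|_{L_p} ds\bigr)^q dh$, to which I apply Hardy's inequality (Lemma \ref{Hardy-ineq}) with $\sigma = \alpha/q$; the hypothesis $\alpha < q-1$ is exactly the condition $\sigma < 1 - 1/q$ needed for Hardy, and this yields the bound $N \int_0^\infty \|D_y f(\cdot, s)\|_{L_p}^q s^\alpha\, ds$.

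The main obstacle is conceptual rather than technical: choosing the correct anchor height. Taking $y = |h|$ matches the horizontal and vertical scales so that the resulting exponent arithmetic produces exactly $h^\alpha$, and simultaneously places Hardy's inequality in its sharp range determined by $\alpha < q - 1$. The condition $\alpha > -1$ meanwhile guarantees $\ell \in (0,1)$ so that the Besov seminorm is well defined. Finally, I would invoke the equivalence of $\|\cdot\|_{b^\ell_{p,q}}$ with $\|\cdot\|^{(1)}_{B^\ell_{p,q}}$ from Lemma \ref{equiv-norm} only if needed; here the integral form with $dh/|h|$ is directly amenable, so the seminorm can be estimated without passing through the equivalent norms.
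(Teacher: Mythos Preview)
Your proposal is correct and follows essentially the same approach as the paper: both anchor to height $|h|$, split $\Delta_h f(\cdot,0)$ into a horizontal increment at level $|h|$ (bounded by $|h|\,\|D_x f(\cdot,|h|)\|_{L_p}$) and a vertical contribution controlled by $\int_0^{|h|}\|D_y f(\cdot,s)\|_{L_p}\,ds$, then feed these into the seminorm and apply Hardy's inequality with $\sigma=\alpha/q$. The only cosmetic difference is that the paper writes the vertical part as two separate increments $\Delta_{2,|h|}f(\cdot,0)$ and $\Delta_{2,|h|}f(\cdot+h,0)$ before applying the fundamental theorem of calculus, whereas you combine them into the single integral $\int_0^{|h|}\Delta_h D_y f(\cdot,s)\,ds$ and then use $\|\Delta_h D_y f(\cdot,s)\|_{L_p}\le 2\|D_y f(\cdot,s)\|_{L_p}$; the resulting bounds and exponent arithmetic are identical.
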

\begin{proof} Let $g(x) = f(x, 0)$ for $x \in \bR$ and note that
\[
\begin{split}
\Delta_h g(x) & = [f(x+h, 0) - f(x+h, |h|)]  + [f(x+h, |h|) - f(x, |h|)] \\
& \qquad + [f(x, |h|) - f(x,0)] \\
& = -\Delta_{2,|h|}f(x+h, 0) + \Delta_{1,h} f(x, |h|) + \Delta_{2,|h|} f(x, 0), \quad x \in \bR.
\end{split}
\]
Then, by triangle inequality, it follows that
\[
\begin{split}
\| \Delta_h g\|_{L_p(\bR)}& \leq  \left(\int_{\bR}|\Delta_{2,|h|}f(x+h, 0)|^p dx  \right)^{\frac{1}{p}} +  \left(\int_{\bR}| \Delta_{1,h} f(x, |h|)|^p dx  \right)^{\frac{1}{p}} \\
& \qquad +  \left(\int_{\bR}|  \Delta_{2,|h|} f(x, 0)|^p dx  \right)^{\frac{1}{p}} \\
& =2  \left(\int_{\bR}|\Delta_{2,|h|}f(x, 0)|^p dx  \right)^{\frac{1}{p}} +  \left(\int_{\bR}| \Delta_{1,h} f(x, |h|)|^p dx  \right)^{\frac{1}{p}}.
\end{split}
\]
Then, for $\ell = 1 -\frac{1+\alpha}{q}$, we have
\[
\begin{split}
& \left( \int_{\bR} \Big(\frac{ \| \Delta_h g\|_{L_p(\bR)}}{|h|^\ell} \Big)^q \frac{dh}{|h|} \right)^{\frac{1}{q}} \\
& \leq N \left( \int_0^\infty h^{-1-q\ell }  \left(\int_{\bR}|\Delta_{2,h}f(x, 0)|^p dx  \right)^{\frac{q}{p}} dh \right)^{1/q} \\
& \qquad + N \left( \int_0^\infty h^{-1-q\ell} \left(\int_{\bR}| \Delta_{1,h} f(x, h)|^p dx  \right)^{\frac{q}{p}} dh \right)^{1/q} \\
& = N [I_1 + I_2],
\end{split}
\]
where $N = N(q) >0$. We now control $I_1$. From the fundamental theorem of calculus, we have
\[
\Delta_{2,h}f(x,0) = f(x, h) - f(x,0) = \int_0^h D_2 f(x, \xi) d\xi
\]
and therefore it follows from Minkowski's inequality that
\[
\begin{split}
\left(\int_{\bR}|\Delta_{2,h}f(x, 0)|^p dx  \right)^{\frac{1}{p}} & \leq \left(\int_{\bR}\Big (\int_0^h |D_2 f(x, \xi)| d\xi \Big)^p dx  \right)^{\frac{1}{p}} \\
& \leq \int_0^h \left(\int_{\bR} |D_2f(x, \xi)|^p dx \right)^{1/p} d\xi = \int_0^h w(\xi) d\xi.
\end{split}
\]
where $w(\xi) = \displaystyle{\left(\int_{\bR} |D_2f(x, \xi)|^p dx \right)^{1/p}}$. Then, we see that
\[
\begin{split}
I_1 \leq \left(\int_0^\infty h^{-1-\ell q} \Big(\int_0^h w(\xi) d\xi \Big)^q \right)^{1/q} = \left(\int_0^\infty h^{q(1-\ell - 1/q)} \Big(\frac{1}{h}\int_0^h w(\xi) d\xi \Big)^q \right)^{1/q}
\end{split}
\]
Now as $1-\ell -1/q = \frac{\alpha}{q} < \frac{1}{q'} = \frac{q-1}{q}$, it follows from the weighted Hardy's inequality stated in Lemma \ref{Hardy-ineq}  that
\[
\begin{split}
I_1 \leq N \left(\int_0^\infty |w(y)|^q y^\alpha dy \right)^{1/q} = N\left(\int_0^\infty \left(\int_{\bR} |D_2f(x, y)|^p dx \right)^{q/p} y^\alpha dy\right)^{1/q}.
\end{split}
\]
Next, we control $I_2$, which is defined as
\[
I_2 = \left( \int_0^\infty h^{-1-q\ell} \left(\int_{\bR}| \Delta_{1,h} f(x, h)|^p dx  \right)^{\frac{q}{p}} dh \right)^{1/q}.
\]
By the fundamental theorem of calculus, we have
\[
\Delta_{1, h} f(x, h) = f(x+h, h) - f(x, h) = \int_{0}^{h} D_1f(x+ s, h) ds.
\]
Then,
\[
\begin{split}
\left(\int_{\bR}| \Delta_{1,h} f(x, h)|^p dx  \right)^{\frac{1}{p}} & \leq \left(\int_{\bR}\Big(  \int_{0}^{h} |D_1f(x+s, h)| ds \Big)^p dx  \right)^{\frac{1}{p}} \\
& \leq \int_{0}^h\left(  \int_{\bR} | D_1f(x+s, h)|^p dx \right)^{\frac{1}{p}} ds \\
& = h \|D_1(\cdot, h)\|_{L_p(\bR)},\end{split}
\]
where we have used Minkowski's inequality in second step of the previous estimate. As a result, we see that
\[
\begin{split}
I_2 & \leq  \left( \int_0^\infty  h^{q(1-\ell-1/q)} \left(  \int_{\bR} |D_1f(x, h)|^p dx \right)^{\frac{q}{p}} dh \right)^{\frac{1}{q}} \\
& = \left( \int_0^\infty  \left(  \int_{\bR} |D_1f(x, h)|^p dx \right)^{\frac{q}{p}} h^{\alpha}  dh \right)^{\frac{1}{q}}.
\end{split}
\]
Collecting the estimates of $I_1$ and $I_2$, we see that
\[
\begin{split}
& \|g\|_{b^{\ell}_{p,q}(\bR)} = \left( \int_{\bR} \Big(\frac{ \| \Delta_h g\|_{L_p(\bR)}}{|h|^\ell} \Big)^q \frac{dh}{|h|} \right)^{\frac{1}{q}}    \leq N \|D f\|_{L_{p, q}(\bR^2_+, \mu)},
\end{split}
\]
and the proof is then completed.
\end{proof}

Next, we prove the trace inequality in the multi-dimensional space. 
\begin{lemma} \label{b-p-trace} Let $\vec{p} \in [1, \infty)^d, q \in [1, \infty)$ and $\alpha \in (-1, q-1)$.  Then, there is $N = N(d, \vec{p}, q, \alpha)$ such that
\begin{equation} \label{trace-Rn}
\|f(\cdot, 0)\|_{b^{\ell}_{\vec{p}, q}(\bR^d)} \leq N\| Df\|_{L_{\vec{p}, q}(\bR^{d+1}_+, \mu)},
\end{equation}
for all $f \in C(\bR^d \times [0, \infty))$ such that $Df \in L_{\vec{p}, q}(\bR^{d+1}_+, \mu)$ and for $\ell = 1-\frac{1+\alpha}{q} \in (0,1)$.
\end{lemma}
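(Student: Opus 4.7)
The plan is to mimic the structure of the preceding one--dimensional lemma, but treating the horizontal variable $x \in \bR^d$ as a whole and exploiting the translation invariance of the mixed norm $L_{\vec{p}}(\bR^d)$. Setting $g(x) = f(x,0)$, for each $h \in \bR^d$ I will insert the two intermediate points $f(x+h,|h|)$ and $f(x,|h|)$ to split
\[
\Delta_h g(x) = -\Delta_{d+1,|h|} f(x+h, 0) + \Delta_{h}^{(x)} f(x, |h|) + \Delta_{d+1,|h|} f(x,0),
\]
where $\Delta_{h}^{(x)}$ denotes the full horizontal difference $f(x+h,\cdot)-f(x,\cdot)$. Translation invariance of $L_{\vec{p}}(\bR^d)$ (a direct consequence of Fubini) and the triangle inequality then reduce matters to controlling
\[
I_1 = \int_{\bR^d} \Big( |h|^{-\ell}\,\|\Delta_{d+1,|h|} f(\cdot,0)\|_{L_{\vec{p}}(\bR^d)}\Big)^q \frac{dh}{|h|^d}, \qquad I_2 = \int_{\bR^d} \Big( |h|^{-\ell}\,\|\Delta_h^{(x)} f(\cdot, |h|)\|_{L_{\vec{p}}(\bR^d)}\Big)^q \frac{dh}{|h|^d}.
\]

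For $I_1$, passing to polar coordinates $h = r\xi$ collapses the $\xi$ integration into a constant depending only on $d$, leaving a one--dimensional integral in $r$. Writing $\Delta_{d+1,r} f(x,0) = \int_0^r D_{d+1} f(x,s)\,ds$ and applying Minkowski's inequality to push the $L_{\vec{p}}$ norm inside the integral gives
\[
\|\Delta_{d+1,r} f(\cdot,0)\|_{L_{\vec{p}}(\bR^d)} \leq \int_0^r w(s)\,ds, \qquad w(s) := \|D_{d+1} f(\cdot,s)\|_{L_{\vec{p}}(\bR^d)}.
\]
The condition $\alpha \in (-1, q-1)$ translates to $\sigma := 1-\ell - \frac{1}{q} = \frac{\alpha}{q} < 1 - \frac{1}{q}$, so Hardy's inequality (Lemma \ref{Hardy-ineq}) applies and yields $I_1^{1/q} \leq N\,\|D_{d+1} f\|_{L_{\vec{p},q}(\bR^{d+1}_+,\mu)}$, exactly as in the 1D argument.

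For $I_2$, I will use the representation $\Delta_h^{(x)} f(x,y) = \int_0^1 h\cdot \nabla_x f(x+th,y)\,dt$; Minkowski's inequality together with translation invariance of $L_{\vec{p}}(\bR^d)$ delivers the pointwise bound
\[
\|\Delta_h^{(x)} f(\cdot, y)\|_{L_{\vec{p}}(\bR^d)} \leq |h|\,\|D_x f(\cdot,y)\|_{L_{\vec{p}}(\bR^d)}.
\]
Substituting $y=|h|$ and switching to polar coordinates, the angular integration again gives a constant, while the radial exponent reduces to $q(1-\ell)-1 = \alpha$ thanks to the choice $\ell = 1-\frac{1+\alpha}{q}$. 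Hence $I_2 \leq N\,\|D_x f\|_{L_{\vec{p},q}(\bR^{d+1}_+,\mu)}^q$ with no further machinery needed. Combining the estimates for $I_1$ and $I_2$ proves \eqref{trace-Rn}.

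The main conceptual point, as opposed to genuine obstacle, is the use of translation invariance of the mixed norm when bounding terms of the form $\|\Delta_{d+1,|h|} f(\cdot + h, 0)\|_{L_{\vec{p}}(\bR^d)}$ by $\|\Delta_{d+1,|h|} f(\cdot, 0)\|_{L_{\vec{p}}(\bR^d)}$; this is the place where the argument would break if $\vec{p}$ were replaced by weighted mixed norms in the horizontal directions. Beyond this, the calculation is structurally identical to the one--dimensional case and the only arithmetic to watch is verifying that the exponents $\sigma = \alpha/q$ and $q(1-\ell)-1 = \alpha$ match up in $I_1$ and $I_2$ under the definition of $\ell$.
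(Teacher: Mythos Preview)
Your proposal is correct and follows essentially the same route as the paper: the identical three-term telescoping decomposition of $\Delta_h g$ via the intermediate points $f(x+h,|h|)$ and $f(x,|h|)$, translation invariance of $L_{\vec{p}}(\bR^d)$ to merge the two vertical-difference terms, Minkowski plus polar coordinates plus Hardy (Lemma~\ref{Hardy-ineq} with $\sigma=\alpha/q$) for $I_1$, and Minkowski plus translation invariance plus polar coordinates for $I_2$. Your exponent bookkeeping ($\sigma=\alpha/q<1-1/q$ and $q(1-\ell)-1=\alpha$) matches the paper's exactly.
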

\begin{proof} Let $g(x) = f(x, 0),  x \in \bR^d$. For $x, h \in \bR^d$ and $y, s \in [0, \infty)$, we write
\[
\begin{split}
&\Delta_{d+1, s} f(x, y) = f(x, y+s) - f(x,y), \quad \text{and} \\
& \Delta_h f(x,y) = f(x+h, y) - f(x, y).
\end{split}
\]
We have
\[
\begin{split}
\Delta_h g(x) & = f(x+h, 0) - f(x,0) \\
& =-[f(x,0) - f(x, |h|)] + [f(x+h, |h|) - f(x, |h|)] \\
& \qquad - [f(x+h, |h|) - f(x+h, 0)]\\
& = -\Delta_{d+1, |h|} f(x,0) + \Delta_{h} f(x, |h|) - \Delta_{d+1, |h|} f(x+h,0).
\end{split} 
\]
Then, by the triangle inequality, we have
\[
\begin{split}
\|\Delta_h g\|_{L_{\vec{p}}(\bR^d)} &  \leq \| \Delta_{d+1, |h|} f(\cdot,0)\|_{L_{\vec{p}}(\bR^d)} +\| \Delta_{h} f(\cdot, |h|) \|_{L_{\vec{p}}(\bR^d)} \\
& \qquad + \| \Delta_{d+1, |h|} f(\cdot+h,0) \|_{L_{\vec{p}}(\bR^d)} \\
& = 2 \| \Delta_{d+1, |h|} f(\cdot,0)\|_{L_{\vec{p}}(\bR^d)} +\| \Delta_{h} f(\cdot, |h|) \|_{L_{\vec{p}}(\bR^d)}.
\end{split}
\]
Hence, it follows that
\[
\begin{split}
\|g\|_{b^\ell_{{\vec{p}}, q}(\bR^d)} & =  \left[ \int_{\bR^d} \Big ( \frac{\|\Delta_h g\|_{L_{\vec{p}}(\bR^d)}}{|h|^{\ell}}\Big)^q \frac{dh}{|h|^d} \right]^{\frac{1}{q}} \\
& \leq N \left[\int_{\bR^d} |h|^{-q\ell -d}\| \Delta_{d+1, |h|} f(\cdot,0)\|_{L_{\vec{p}}(\bR^d)}^q dh \right]^{\frac{1}{q}} \\ & \qquad + N \left[\int_{\bR^d} |h|^{-q\ell -d}\| \Delta_{h} f(\cdot, |h|) \|_{L_{\vec{p}}(\bR^d)}^q dh \right]^{\frac{1}{q}}
= N[I_1 + I_2].
\end{split}
\]
By the fundamental theorem of calculus, we have
\[
\Delta_{d+1, |h|} f(x,0) = \int_0^{|h|} D_{d+1} f(x, s) ds.
\]
Then, it follows from Minkowski's inequality that
\[
\begin{split}
\| \Delta_{d+1, |h|} f(\cdot+h,0) \|_{L_{\vec{p}}(\bR^d)} & \leq \left \| \int_0^{|h|} D_{d+1} f(\cdot, s) ds\right \|_{L_{\vec{p}} (\bR^d)}\\
& \leq \int_0^{|h|}   \|D_{d+1} f(\cdot, s)\|_{L_{\vec{p}}(\bR^d)} ds \\
&= \int_0^{|h|} w(s) ds,
\end{split}
\]
where $w(s) =  \|D_{d+1} f(\cdot, s)\|_{L_{\vec{p}}(\bR^d)}$.  Therefore, 
\[
\begin{split}
I_1 & \leq    \left( \int_{\bR^d} |h|^{-q\ell -d} \Big( \int_0^{|h|}w(s) ds\Big )^q  dh \right)^{\frac{1}{q}}    \\
 & = N   \left( \int_{0}^\infty r^{-q\ell -d} \Big( \int_0^{r}w(s) ds\Big )^q r^{d-1} dr \right)^{\frac{1}{q}}   \\
& = N   \left( \int_{0}^\infty r^{q(1-\ell -1/q)} \Big( \frac{1}{r}\int_0^{r}w(s) ds\Big )^q  dr \right)^{\frac{1}{q}},
\end{split}
\]
where we have used polar coordinate in the second step of the previous estimate. Then, as $1-\ell- \frac{1}{q} = \frac{\alpha}{q} < 1- \frac{1}{q}$,  it follows from  Lemma \ref{Hardy-ineq} that
\[
\begin{split}
I_1 & \leq N   \left( \int_{0}^\infty |w(s)|^q s^\alpha ds \right)^{\frac{1}{q}} =N \|D_{d+1} f\|_{L_{\vec{p}, q}(\bR^{d+1}_+, \mu)}.
\end{split}
\]
It now remains to control $I_2$.  Again, from the fundamental theorem of calculus, it follows that
\[
\Delta_h f(x, |h|) = f(x+h, |h|) - f(x, |h|) = \int_0^1 D_x f(x+ sh, |h|) \cdot hds
\]
where we denote
$$D_x f(x, y) = (D_{x_1} f(x,y), \ldots, D_{x_d} f(x,y)), \quad  (x, y) \in \bR^{d} \times \bR_+.$$ 
Then, by the Minkowski inequality, we infer that
\[
\begin{split}
\|\Delta_h f(\cdot, |h|)\|_{L_{\vec{p}}(\bR^d)} & \leq |h|  \left\|  \int_0^1 |D_xf(\cdot+ sh, |h|)| ds \right\|_{L_{\vec{p}}(\bR^d)} \\
& \leq |h|  \int_0^1 \left\| D_xf(\cdot + sh, |h|) \right\|_{L_{\vec{p}}(\bR^d)} ds \\
& =|h| \left\| D_xf(\cdot, |h|) \right\|_{L_{\vec{p}}(\bR^d)}.
\end{split}
\]
Therefore, it follows that
\[
\begin{split}
I_2 & \leq \left[\int_{\bR^d} |h|^{q(1-\ell) -d} \left\| D_xf(\cdot, |h|) \right\|_{L_{\vec{p}}(\bR^d)}^q dh \right]^{\frac{1}{q}} \\
& = N \left[\int_{0}^{\infty} r^{\alpha} \left\| D_xf(\cdot, r) \right\|_{L_{\vec{p}}(\bR^d)}^q dr \right]^{\frac{1}{q}}= N  \|D_x f\|_{L_{\vec{p}, q}(\bR^{d+1}_+, \mu)}.
\end{split}
\]
Finally, by collecting the estimates of the two terms $I_1$ and $I_2$, we obtain \eqref{trace-Rn}. The proof is then completed.
\end{proof}
Our next step is to show that for a given $g \in B^{\ell}_{\vec{p},q}(\bR^d)$ with $\ell = 1 - \frac{1+\alpha}{q}$, $\alpha \in (-1, q-1)$, $\vec{p} \in [1, \infty)^d$, and $q \in [1, \infty)$, we can extend $g$ to a function $u \in W^{1}_{\vec{p}, q} (\bR^{d+1}_+, \mu)$. For this purpose, for each $k \in \mathbb{Z}$, let $S_k = (2^{-k-1}, 2^{-k})$ and consider the family of partition of unity $\{\psi_k\}_{k \in \mathbb{Z}}$ satisfying $\psi_k \in C^\infty_0(\bR)$, $0 \leq \psi_k \leq 1$, 
\begin{equation} \label{psi-support}
 S_k \subset \text{spt}(\psi_k) \subset (7/2^{k+4}, 9/2^{k+3}) \subset S_{k-1} \cup S_k \cup S_{k+1}
\end{equation}
for all $k \in \mathbb{N}$. Moreover, there is $N_0>0$ such that
\begin{equation} \label{psi-derivetive}   
\|D^\gamma\psi_k\|_{L_\infty(\bR)} \leq N_0 2^{k \gamma}, \quad \forall \ \gamma \in \mathbb{N}, \quad \forall \ k \in \mathbb{Z}
\end{equation}
and
\begin{equation} \label{sum-partition}
\sum_{k\in\mathbb{Z}} \psi_k(y) =1, \quad \forall \ y \in (0, \infty).
\end{equation}
We note that the existence of such a family of partition of unity $\{\psi_k\}_{k \in \mathbb{Z}}$ is well-known, see \cite[Lemma 5, p. 43]{Burenkov} for instance.
As in \eqref{varphi-delta}, let us recall that $\varphi \in C_0^\infty(\bR^d)$ is a standard cut-off function which is radial and it satisfies
\[
0 \leq \varphi \leq 1, \quad \int_{\bR^d} \varphi(x) dx =1, \quad \text{and} \quad \textup{supp}(\varphi) \subset B_1.
\]
We denote $\varphi_k (x) = 2^{kd}\varphi(2^{k}x)$ 
and
\[
\mathcal{A}_{k,\varphi} (g)(x) = \int_{\bR^d} g(z) \varphi_k(x-z) dz, \quad k \in \mathbb{Z}.
\]
For a given $g \in B^{\ell}_{\vec{p},q}(\bR^d)$, we define the following Burenkov's extension map
\begin{equation} \label{E.def}
\oE(g)(x,y) = \sum_{k=1}^\infty \psi_k(y) \mathcal{A}_{k, \varphi}(g)(x), \quad (x,y) \in \bR^{d+1}_+.
\end{equation}

\begin{lemma}[Boundedness of the extension map] \label{E-lemma} Let $\vec{p} \in [ 1, \infty)^d, q \in [ 1, \infty)$, $\alpha \in (-1, q-1)$, $\ell =1-\frac{1+\alpha}{q}$, and let $\oE$ be defined as in \eqref{E.def}. Then $\oE: B^{\ell}_{\vec{p},q}(\bR^d) \rightarrow W^1_{\vec{p}, q}(\bR^{d+1}_+, \mu)$ and there exists $N = N(\vec{p}, q, \alpha, d)>0$ such that
\begin{equation} \label{bounded-E}
\|\oE(g)\|_{W^1_{\vec{p}, q}(\bR^{d+1}_+, \mu)} \leq N \|g\|_{B^{\ell}_{\vec{p},q}(\bR^d)}, \quad \forall~g \in B^{\ell}_{\vec{p},q}(\bR^d).
\end{equation}
Moreover,
\begin{equation} \label{lim-eqn}
\lim_{y\rightarrow 0^+} y^{-\ell}\|\oE(g)(\cdot, y) - g\|_{L_{\vec{p}}(\bR^d)} =0 \quad \text{and} \quad \textup{spt}(\oE(g)(x, \cdot)) \subset [0, 1) \quad \forall \ x \in \bR^d.
\end{equation}
\begin{proof} From \eqref{psi-support}, we note that $\text{spt}(\psi_k) \subset (0, 1)$ when $k \in \mathbb{N}$. Then,
\begin{equation} \label{sup-E}
\text{spt}(\oE(g)(x, \cdot)) \subset [0, 1], \quad \forall \ x \in \bR^d.
\end{equation}
On the other hand, from mixed-norm Young's inequality (see \cite[Theorem 2.1]{P} for instance)  that
\[
\|\mathcal{A}_{k, \varphi}(g)\|_{L_{\vec{p}}(\bR^d)} \leq \|\varphi_k\|_{L_1(\bR^d)} \|g\|_{L_{\vec{p}}(\bR^d)} = \|g\|_{L_{\vec{p}}(\bR^d)} .
\]
Therefore, for each $y >0$, by Minkowski's inequality and \eqref{sum-partition} 
\[
\begin{split}
\|\oE(g)(\cdot, y)\|_{L_{\vec{p}}(\bR^d)} & \leq \sum_{k=1}^\infty \psi_k(y) \|\mathcal{A}_{k, \varphi}(g)\|_{L_{\vec{p}}(\mathbb{R}^d)} \\
& \leq \|g\|_{L_{\vec{p}}(\bR^d)} \sum_{k=1}^\infty \psi_k(y) \leq  \|g\|_{L_{\vec{p}}(\bR^d)}.
\end{split}
\]
From this and due to \eqref{sup-E} and  $\alpha >-1$
\begin{align} \notag
\|\oE(g)\|_{L_{\vec{p}, q}(\bR^{d+1}_+, \mu)} & \leq\|g\|_{L_{\vec{p}}(\bR^d)}\Big( \int_0^1 y^\alpha dy\Big)^{1/q} = N \|g\|_{L_{\vec{p}}(\bR^d)}\\ \label{Lp-E}
& \leq   N  \|g\|_{B^{\ell}_{\vec{p},q}(\bR^d)},
\end{align}
where $N = N(\alpha, q)>0$. Next, we control the $L_{\vec{p},q}(\bR^{d+1}, \mu)$-norms of the derivatives of $\oE(g)$. We note that from the definition of the sequence $\{S_k\}_k$, the definition of $\mu$, and \eqref{psi-support}, it follows that
\[
\oE(g)(x, y) = \psi_1(y) \mathcal{A}_{1, \varphi}g(x), \quad x \in \mathbb{R}^d, \quad y > 7/16
\]
and 
\[
\|\oE(g)\|_{W^{1}_{\vec{p}, q}(\bR^d \times (7/16, \infty), \mu)} = N(\alpha) \|\oE(g)\|_{W^{1}_{\vec{p}, q}(\bR^d \times (7/16, 1))}.
\]
We note that for all $k = 0, 1, 2, \ldots, $
\[
\|D_x^{k}\oE(g)\|_{L_{\vec{p}, q}(\bR^d \times (7/16, 1)} = \|\psi_1\|_{L_q(7/16, 1)} \|D_x^k\mathcal{A}_{1, \varphi}(g)_{L_{\vec{p}}(\bR^d)} \leq N \|g\|_{L_{\vec{p}}(\bR^d)}.
\]
On the other hand,  
\[
\|D_y \oE(g)\|_{L_{\vec{p}, q}(\bR^d \times (7/16, \infty)} = \|D_y \psi_1\|_{L_q(7/16, 1)} \|\mathcal{A}_{1, \varphi}(g)\|_{L_{\vec{p}}(\bR^d)} \leq N \|g\|_{L_{\vec{p}}(\bR^d)}.
\]
Therefore, we conclude that
\begin{equation} \label{large-y}
\|D\oE(g)\|_{L_{\vec{p}, q}(\bR^d \times (7/16, \infty), \mu)} \leq N  \|g\|_{L_{\vec{p}}(\bR^d)}.
\end{equation}

\smallskip
Next, we consider the case when $0 < y \leq 7/16$.  We recall the definition of $\omega(\delta, g)_{\vec{p}}$ in \eqref{Lp-oss}. Applying Lemma \ref{convol-lemma}, we infer that
\[
\begin{split}
\|D_x\oE(g)(\cdot, y)\|_{L_{\vec{p}}(\bR^d)} & \leq \sum_{k=1}^\infty \psi_k(y) 2^{k} \|\mathcal{A}_{k, D\varphi}(g)\|_{L_{\vec{p}}(\mathbb{R}^d)} \\
& \leq N  \sum_{k=1}^\infty \psi_k(y)2^k\omega(2^{-k}, g)_{\vec{p}}.
\end{split}
\]
Then, it follows from Lemma \ref{equiv-norm} that
\begin{align} \notag
& \|D_x\oE(g)\|_{ L_{\vec{p}, q}(\bR^d \times (0, 7/16), \mu)} \\ \notag
& \leq N\left(\sum_{k=1}^\infty \|\psi_k\|_{L_q((0,1), \mu)}^q2^{kq}\omega(2^{-k}, g)_{\vec{p}}^q \right)^{1/q} \\ \notag 
&  \leq  N  \left(\sum_{k=1}^\infty 2^{q(1-\frac{1+\alpha}{q})k}\omega(2^{-k}, g)_{\vec{p}}^q \right)^{1/q} \leq N \|g\|_{B^{\ell}_{\vec{p},q}}^{(2)}\\ \label{Dx-2}
& \leq N \|g\|_{B^{\ell}_{\vec{p},q}},
\end{align}
where  $N = N(d, \vec{p}, q, \alpha) >0$. Now, we control $\|D_y\oE(g)\|_{L_{\vec{p}, q}(
\bR^d \times (0, 7/16),\mu)}$. For $y \in (0, 7/16)$, we see that $\psi_k(y) =0$ for all $k \in \mathbb{Z}$ and $k \leq 0$. Then, it follows from \eqref{sum-partition} that
\[
\sum_{k=1}^\infty \psi_k(y) =1\quad \text{and then therefore} \quad \sum_{k=1}^\infty \psi_k'(y) =0.
\]
Due to this 
\[
D_y \oE (g)(x,y) = \sum_{k=1}^\infty \psi_k'(y) \mathcal{A}_{k, \varphi}(g)(x) =  \sum_{k=1}^\infty \psi_k'(y) \big[\mathcal{A}_{k, \varphi}(g)(x) - g(x)\big],
\]
for each $(x,y) \in \bR^d \times (0, 7/16)$. From this, \eqref{psi-derivetive}, Lemma \ref{convol-lemma}, and the first assertion in Lemma \ref{equiv-norm},  it follows that
\begin{align} \notag
& \|D_y \oE (g)(\cdot,y)\|_{L_{\vec{p}, q}(\bR^d\times (0, 7/16), \mu)} \\\notag
& \leq N(q) \left( \sum_{k=1}^\infty \|\psi'_k(y)\|_{L_q((0, 7/16), \mu)}^q \|\mathcal{A}_{k, \varphi}(g)- g\|_{L_{\vec{p}}(\bR^d)}^q \right)^{1/q} \\ \notag
& \leq N \left( \sum_{k=1}^\infty  2^{k(1-\frac{1+\alpha}{q})q}  \omega(2^{-k}, g)_{\vec{p}}^q \right)^{1/q} \leq N \|g\|_{B_{\vec{p},q}^{\ell}}^{(2)}\\ \label{Dy-2}
& \leq N \|g\|_{B_{\vec{p},q}^{\ell}}.
\end{align}
Now, by collecting \eqref{large-y}, \eqref{Dx-2}, and \eqref{Dy-2}, we obtain
\begin{equation} \label{DE-est}
\|D\oE(g)\|_{L_{\vec{p}, q}(\bR^{d+1}_+, \mu)} \leq N  \|g\|_{B_{\vec{p},q}^{\ell}}
\end{equation} 
for $N = N(\vec{p}, q, \alpha, d) >0$. Hence, \eqref{bounded-E} follows from the estimates \eqref{Lp-E} and \eqref{DE-est}.

\smallskip
Now, observe that  second assertion in \eqref{lim-eqn} is verified in \eqref{sup-E},  it remains to prove the first assertion in \eqref{lim-eqn}. For each $y \in (0, 1/4)$, let $s = s(y) \in \mathbb{N}$ such that $2^{-(s+1)}< y < 2^{-s}$. Then, we see that
\[
\begin{split}
\|\oE(g)(\cdot, y) - g\|_{L_{\vec{p}}(\bR^d)} & = \Big\|\sum_{k=s-1}^{s+1} \psi_k(y) \mathcal{A}_{k, \varphi}(g) - g\Big\|_{L_{\vec{p}}(\bR^d)} \\
& \leq \sum_{k=s-1}^{s+1} \psi_k(y) \|  \mathcal{A}_{k, \varphi}(g) - g\|_{L_{\vec{p}}(\bR^d)} \\
& \leq N \sum_{k=s-1}^{s+1} \psi_k(y)  \omega(2^{-k}, g)_{\vec{p}}  \\
& \leq N y^{\ell} \sum_{k=s-1}^{s+1}  2^{k\ell}  \omega(2^{-k}, g)_{\vec{p}}.
\end{split}
\]
We observe  from  Lemma \ref{equiv-norm} that
\[
\lim_{k\rightarrow \infty}2^{k\ell}  \omega(2^{-k}, g)_{\vec{p}} =0.
\]
Moreover, as $y \rightarrow 0^+$ we have $s \rightarrow \infty$. Therefore,
\[
\begin{split}
& \lim_{y\rightarrow 0^+}y^{-\ell}\|\oE(g)(\cdot, y) - g\|_{L_{\vec{p}}(\bR^d)}  =0,
\end{split}
\]
and the proof of the lemma is completed.
\end{proof}
\end{lemma}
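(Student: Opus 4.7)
My plan for Lemma \ref{E-lemma} is to verify the four assertions in turn using the tools already developed, namely Lemma \ref{convol-lemma} (which provides $\|\varphi_\delta * f - f\|_{L_{\vec{p}}} \leq \omega(\delta, f)_{\vec{p}}$ and $\|D_i \varphi_\delta * f\|_{L_{\vec{p}}} \leq N \omega(\delta, f)_{\vec{p}}$) and Lemma \ref{equiv-norm} (the discrete equivalent norm $\|g\|_{B^\ell_{\vec{p},q}}^{(2)}$). The support claim $\textup{spt}(\oE(g)(x, \cdot)) \subset [0, 1)$ is immediate from $\textup{spt}(\psi_k) \subset (0, 1)$ for all $k \geq 1$. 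For the $L_{\vec{p}, q}$ bound, mixed-norm Young's inequality yields $\|\mathcal{A}_{k,\varphi}(g)\|_{L_{\vec{p}}} \leq \|g\|_{L_{\vec{p}}}$ since $\|\varphi_k\|_{L_1} = 1$, and combined with $\sum_k \psi_k(y) = 1$ and Minkowski one obtains $\|\oE(g)(\cdot, y)\|_{L_{\vec{p}}} \leq \|g\|_{L_{\vec{p}}}$; integration against $y^\alpha dy$ on $(0, 1)$ (finite since $\alpha > -1$) closes this step.

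For the horizontal derivatives I would use $D_{x_i} \mathcal{A}_{k,\varphi}(g) = (D_i\varphi_k) * g$ together with the second assertion in Lemma \ref{convol-lemma}, giving $\|D_{x_i} \mathcal{A}_{k,\varphi}(g)\|_{L_{\vec{p}}} \leq N 2^k \omega(2^{-k}, g)_{\vec{p}}$. Substituting into the definition of $\oE(g)$ and using that the supports of the $\psi_k$ are essentially disjoint dyadic intervals of length $\asymp 2^{-k}$, the $q$-power integration against $y^\alpha dy$ produces $\|\psi_k\|_{L_q((0,1), \mu)}^q \lesssim 2^{-k(1+\alpha)}$; this combines with $2^{kq}$ to give exactly $2^{kq\ell}$ with $\ell = 1 - (1+\alpha)/q$, realizing the norm $\|g\|_{B^\ell_{\vec{p},q}}^{(2)}$, which by Lemma \ref{equiv-norm} is equivalent to $\|g\|_{B^\ell_{\vec{p},q}}$.

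The principal obstacle is the vertical derivative $D_y \oE(g)$. A direct estimate $|D_y \oE(g)(x, y)| \leq \sum_k |\psi_k'(y)|\,|\mathcal{A}_{k,\varphi}(g)(x)|$ using $|\psi_k'| \lesssim 2^k$ and the uniform bound $\|\mathcal{A}_{k,\varphi}(g)\|_{L_{\vec{p}}} \leq \|g\|_{L_{\vec{p}}}$ yields a divergent geometric sum, so this route is too crude. The key move is to differentiate the identity $\sum_k \psi_k \equiv 1$ on $(0, \infty)$ to obtain $\sum_k \psi_k'(y) = 0$, which legitimises the rewrite
\[
D_y \oE(g)(x, y) = \sum_{k=1}^\infty \psi_k'(y)\,\bigl[\mathcal{A}_{k,\varphi}(g)(x) - g(x)\bigr].
\]
Now the first assertion of Lemma \ref{convol-lemma} yields $\|\mathcal{A}_{k,\varphi}(g) - g\|_{L_{\vec{p}}} \leq \omega(2^{-k}, g)_{\vec{p}}$, and the same dyadic bookkeeping as for $D_x$ produces the desired bound by $\|g\|_{B^\ell_{\vec{p},q}}^{(2)}$.

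Finally, to establish $\lim_{y \to 0^+} y^{-\ell} \|\oE(g)(\cdot, y) - g\|_{L_{\vec{p}}} = 0$, I would pick, for small $y > 0$, the unique $s \in \mathbb{N}$ with $2^{-(s+1)} < y \leq 2^{-s}$; only $\psi_{s-1}, \psi_s, \psi_{s+1}$ are nonzero at $y$, and $\sum_k \psi_k(y) = 1$, so
\[
\|\oE(g)(\cdot, y) - g\|_{L_{\vec{p}}} \leq \sum_{k=s-1}^{s+1} \psi_k(y)\,\|\mathcal{A}_{k,\varphi}(g) - g\|_{L_{\vec{p}}} \lesssim \max_{|k-s|\leq 1} \omega(2^{-k}, g)_{\vec{p}}.
\]
Dividing by $y^\ell \asymp 2^{-s\ell}$ turns the right-hand side into finitely many terms of the form $2^{k\ell} \omega(2^{-k}, g)_{\vec{p}}$ with $k \to \infty$ as $y \to 0^+$; since $\|g\|^{(2)}_{B^\ell_{\vec{p},q}} < \infty$ forces $2^{k\ell} \omega(2^{-k}, g)_{\vec{p}}$ to vanish in the limit (being a tail of a convergent series when $q < \infty$), the claim follows.
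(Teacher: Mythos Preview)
Your proposal is correct and matches the paper's proof essentially step for step: the support claim from \eqref{psi-support}, the $L_{\vec p,q}$ bound via mixed-norm Young and $\sum_k\psi_k=1$, the $D_x$ estimate via Lemma~\ref{convol-lemma} together with the dyadic $L_q(\mu)$-mass $\|\psi_k\|_{L_q((0,1),\mu)}^q\lesssim 2^{-k(1+\alpha)}$, the crucial cancellation $\sum_{k\ge1}\psi_k'=0$ for $D_y$, and the limit via the finitely many active terms and the vanishing of $2^{k\ell}\omega(2^{-k},g)_{\vec p}$. The one point you glossed over is that $\sum_{k\ge1}\psi_k(y)=1$ (hence $\sum_{k\ge1}\psi_k'(y)=0$) holds only for $y\le 7/16$, since $\textup{spt}(\psi_0)$ meets $(7/16,1)$; the paper therefore treats $y>7/16$ separately with the trivial single-term identity $\oE(g)(x,y)=\psi_1(y)\,\mathcal A_{1,\varphi}(g)(x)$, and you should insert this easy split as well.
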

\begin{proof}[Proof of Theorem \ref{thrm1}] From Lemma \ref{L-p-trace} and Lemma \ref{b-p-trace}, it follows that the trace map $\mathsf{T} :W^{1}_{\vec{p}, q}(\bR^d_+, \mu)\rightarrow B^{\ell}_{\vec{p}, q}(\bR^d)$ is well-defined and \eqref{trace-est} holds. Therefore, (i) of Theorem \ref{thrm1} is proved. On the other hand, (ii) of Theorem \ref{thrm1} follows from Lemma \ref{E-lemma}.
\end{proof}
\section{Proofs of Corollary \ref{thrm3} and Corollary \ref{thrm2}} \label{Sec-3}
This section provides the proofs of Corollary \ref{thrm3} and Corollary \ref{thrm2}, which are applications of Theorem \ref{thrm1}. We first start with the proof of Corollary \ref{thrm3}.
\begin{proof}[Proof of Corollary \ref{thrm3}]  Let $v = \oE(g)$, where $\oE$ is defined in Theorem \ref{thrm1}. We have $v \in W^1_{\vec{p}, q}(\bR^{d+1}_+, \mu)$ and
\[
\|v\|_{W^1_{\vec{p}, q}(\bR^{d+1}_+, \mu)} \leq N \|g\|_{B_{\vec{p}, q}^{\ell}(\bR^d)}
\]
where $N = N(d, \vec{p}, q, \alpha)>0$. Let $w = u-v$, and we see that $u \in W^1_{\vec{p}, q}(\bR^{d+1}_+, \mu)$ is a weak solution of \eqref{expl-PDE} if and only if $w \in W^1_{\vec{p}, q}(\bR^{d+1}_+, \mu)$ is a weak solution of
\begin{equation} \label{PDE-trn}
\left\{
\begin{array}{cccl}
\sL w(x,y)  & = & D_i (y^\beta G_i) + \sqrt{\lambda} y^\beta \tilde{f} & \quad (x, y) \in  \bR^{d} \times \bR_+\\
w (x, 0) & =& 0 & \quad x \in  \bR^{d},
\end{array} \right. 
\end{equation}
where
\[
\tilde{f} = f - \sqrt{\lambda} v \quad \text{and} \quad G_i = F_i + a_{ij} D_j v, \quad i = 1, 2,\ldots, d+1.
\]
Now, we apply \cite[Remark 2.7]{DP21} and  \cite[Theorem 2.8]{DP21} to  \eqref{PDE-trn} to find $\delta>0$ and $\lambda_0>0$ such that the existence, uniqueness, and estimates of solutions $w \in W^1_{\vec{p}, q}(\bR^{d+1}_+, \mu)$ to the equation for \eqref{PDE-trn} follow when $\lambda \geq \lambda_0 R_0^{-2}$ and \eqref{VMO-cond} holds.  In particular, we have
\[
\|Dw\|_{L_{\vec{p}, q}(\bR^{d+1}_+, \mu)} + \sqrt{\lambda} \|w\|_{L_{\vec{p}, q}(\bR^{d+1}_+, \mu)} \leq N \Big[\|G\|_{L_{\vec{p}, q}(\bR^{d+1}_+, \mu)} + \|\tilde{f}\|_{L_{\vec{p}, q}(\bR^{d+1}_+, \mu)} \Big]
\]
for $\lambda \geq \lambda_0 R_0^{-2}$. From this and by the triangle inequality, we obtain
\[
\begin{split}
& \|Du\|_{L_{\vec{p}, q}(\bR^{d+1}_+, \mu)} + \sqrt{\lambda} \|u\|_{L_{\vec{p}, q}(\bR^{d+1}_+, \mu)} \\
& \leq N\Big[ \|F\|_{L_{\vec{p}, q}(\bR^{d+1}_+, \mu)} + \|f\|_{L_{\vec{p}, q}(\bR^{d+1}_+, \mu)} \Big]\\
& \quad + N\Big[ \|Dv\|_{L_{\vec{p}, q}(\bR^{d+1}_+, \mu)} + \sqrt{\lambda} \|v\|_{L_{\vec{p}, q}(\bR^{d+1}_+, \mu)} \Big].
\end{split}
\]
The proof of Corollary \ref{thrm3} is completed.
\end{proof}
Next, we prove Corollary \ref{thrm2}
\begin{proof}[Proof of Corollary \ref{thrm2}] Since $\Omega$ is a bounded Lipschitz domain, it follows from the Stein extension theorem (see \cite[p. 181]{Stein}) that there is a linear, bounded extension map $\oE_0 :W^{1}_p(\Omega) \rightarrow W^1_p(\bR^d)$ satisfying
\[
\|\oE_0 (u)\|_{W^1_p(\bR^d)} \leq N(p, n, \Omega) \|u\|_{W^1_p(\Omega)}, \quad \forall \ u \in W^1_p(\Omega).
\]
As $\oE_0$ is linear, we see that $\oE_0(u) \in W^{1}_{p,q}(\bR^d\times (0,1), \mu)$ for all $u \in W^1_{p,q}(Q, \mu)$ and
\[
\|\oE_0 (u)\|_{W^1_{p,q}(\bR^d \times (0,1), \mu)} \leq N(p, n, \Omega) \|u\|_{W^1_{p, q}(Q, \mu)}, \quad \forall~u \in W^1_{p, q}(Q, \mu).
\]
Next, let $\phi_0 \in C_c^\infty(\bR)$ be the standard cut-off function such that $\phi_0 (y) =0$ for $y \geq 1$ and $\phi_0(y) = 1$ for $y \in [0, 1/2]$. We then define $\mathsf{T}_\Omega: W^1_{p,q}(Q, \mu) \rightarrow B^{\ell}_{p,q}(\Omega)$ as 
$$\mathsf{T}_\Omega (u)  = \mathsf{T} \circ \oE_0 (\bar{u})_{|\Omega}, $$
for $u \in W^1_{p,q}(Q, \mu)$, where $\bar{u}(x,y) = u(x,y) \phi_0(y), \ x \in \Omega, y \in \bR_+$, and $\mathsf{T}$ is defined as in Theorem \ref{thrm1}. It is then clear that  the assertion in Corollary \ref{thrm2} follows.
\end{proof}
\smallskip
\noindent
{\bf Acknowledgment}. T. Phan is partially supported by the Simons Foundation, grant \# 354889.

\end{document}